\newcommandx{\clay}[2][1=]{\todo[linecolor=cyan,backgroundcolor=cyan!25,bordercolor=cyan,#1]{\tiny Clay: #2}}
\newcommandx{\colin}[2][1=]{\todo[linecolor=green,backgroundcolor=green!25,bordercolor=green,#1]{\tiny Colin: #2}}
\title{A Gelfand Transform for Spinor Fields on Embedded Riemannian Manifolds}
\author{Colin Roberts\footnote{Ph.D. Candidate, Colorado State University, 1874 Campus Delivery, Fort Collins, CO 80523-1874}}
\affil{Colorado State University, Fort Collins, Colorado, 80523}
\begin{document}

\maketitle

\begin{abstract}
A classical result of Gelfand shows that the topologized spectrum of characters on commutative Banach algebra is homeomorphic to the underlying space. This fact is used in solving the Calder\'on problem in dimension 2 via the \emph{boundary control (BC) method}. To apply the BC method in dimension 3, the algebra of complex holomorphic functions can be replaced by the space of harmonic quaternion fields, but this space is no longer an algebra and is not commutative. Nonetheless, it has been shown that a suitable notion of a spectrum exists and in the case when the underlying space is convex, the spectrum is homeomorphic to the ball. Our goal is to generalize this result to more general manifolds in arbitrary dimension. To do so, we use Clifford algebras of multivector fields and the set of functions we care about is the space monogenic spinor fields. The spectrum consists of spinor valued functionals that respect the module and subalgebra structure of the space of monogenic spinor fields. We prove that for compact regions in $\R^n$, the spectrum is homeomorphic to the manifold itself. Furthermore, some essential lemmas and propositions are proven for arbitrary compact Riemannian $n$-manifolds. Finally, we end with a Stone--Weierstrass theorem which shows the algebra generated by the monogenic spinor fields on arbitrary compact $M$ with boundary is dense in the algebra of continuous spinor fields.
\end{abstract}

\section{Introduction}
This paper is motivated by the inverse tomography (or Calder\'on) problem for Riemannian manifolds \cite{calderon_inverse_1980,uhlmann_inverse_2014,lassas_determining_2001,lee_determining_1989,joshi_inverse_2005, belishev_dirichlet_2008,krupchyk_inverse_2011}. First, let us explain an important related problem called the Electrical Impedance Tomography (EIT) problem and note that it is equivalent to the classical Calder\'on problem in dimension $n=3$. Let $M$ be a body free of interior charges and made from a (possibly anisotropic) Ohmic material with conductivity $\gamma$. Applying a voltage $\phi$ on the boundary $\partial M$ induces a voltage $u$ in the interior and since the interior is free of charges, $u$ satisfies $\mathrm{div}(\gamma~ \mathrm{grad}(u))=0$. We are not allowed access to the interior, which leaves us solely with the ability to making measurements along the boundary $\partial M$. Given $\phi$, we can measure the corresponding current flux $\frac{\partial u}{\partial \normal}$ on $\partial M$ where $\normal$ is the outward normal field. Hence, we have a map $\Lambda$ called the voltage-to-current map defined by $\Lambda \phi \coloneqq \frac{\partial u}{\partial \normal}$. Is it possible to determine the conductivity $\gamma$ from the voltage-to-current map $\Lambda$?

One avenue of active research seeks to solve a more general problem called the Calder\'on problem for Riemannian manifolds \cite{joshi_inverse_2005,belishev_dirichlet_2008,krupchyk_inverse_2011,sharafutdinov_complete_2013,shonkwiler_poincare_2013}. In this setting, one attempts to reconstruct an $n$-dimensional smooth manifold $M$ and metric $g$ from the Dirichlet-to-Neumman (DN) map $\Lambda$ defined on the boundary traces of harmonic differential forms, i.e., forms in the kernel of the Laplace--Beltrami operator $\Delta$. Given the Dirichlet boundary condition $\phi$, we have a unique harmonic $u$. The DN map is defined by $\Lambda \phi \coloneqq \iota^*(\star du)$ which is valid for any $k$-form $\phi$. Fixing $\phi$ to be a 0-form then yields the classical problem discussed previously and in dimension 3, this is equivalent to the EIT problem. From complete knowledge of pairs of Dirichlet and Neumann data $(\phi, \iota^* (\star du))$ for forms of all grade, the geometric inverse problem is to determine the pair $(M,g)$ up to isometry. If you would like, please see Uhlmann's article \cite{uhlmann_inverse_2014} for an excellent explanation of the relationship between the EIT and Calder\'on problem.

Solutions to the Calder\'on problem exist in a handful of special cases, but the $C^\infty$-smooth problem remains open in dimensions greater than two. Belishev \cite{belishev_calderon_2003} and Lassas--Uhlmann \cite{lassas_determining_2001} both show that for surfaces $S$ with one boundary component, the classical DN map determines $S$ up to conformal class. A better result cannot be achieved since the Laplacian is conformally invariant in dimension 2. Belishev, Badanin, and Korikov recently achieved a similar result for surfaces with multiple boundary components \cite{badanin_electric_2021}. It is also known that the DN map for forms recovers partial topological information on arbitrary $M$ such as the Betti numbers \cite{belishev_dirichlet_2008}, but it is not known whether the DN operator can recover $M$ up to homeomorphism. The extension of the DN operator to the complete DN operator in Sharafutdinov and Shonkwiler's paper \cite{sharafutdinov_complete_2013} is able to recover the absolute and relative cohomologies as well.

We will mostly concern ourselves with the technique used by Belishev in his 2003 paper \cite{belishev_calderon_2003} called the \emph{Boundary Control (BC) method}. This technique utilizes the classical Gelfand representation for commutative Banach algebras. That is, for a surface $S$, the spectrum (maximal ideal space) of the commutative Banach algebra of holomorphic functions is homeomorphic to $S$. Alongside this result, Belishev then used the complex structure of the algebra to determine the metric up to conformal equivalence. Furthermore, it is the Gelfand transform that allows for all of this information to be obtained from the boundary. This led to a solution to the Calder\'on problem in dimension 2. For more on the boundary control method, see \cite{belishev_boundary_2017}. 

To solve the problem in dimension three using the BC method, it is natural to consider replacing complex functions with quaternion-valued functions. Belishev and Vakulenko wrote a series of papers on this topic \cite{belishev_algebras_2017,belishev_algebras_2020,belishev_algebraic_2019}. In those papers, the authors work with the space of harmonic quaternion fields using the language of differential forms. Their goal was to complete one portion of the BC method by realizing a Gelfand theory for quaternion fields. The first hurdle in defining a Gelfand spectrum on the space of harmonic quaternion fields is that the space fails to be an algebra. Moreover, it is not commutative. However, the space does contain commutative subalgebras and by carefully defining a meaningful notion of a spectrum that is multiplicative over these subalgebras, they find that for convex regions in $\R^3$, the spectrum is homeomorphic to the ball. Is a similar result true for a more general class of manifolds? 

In this paper, we seek to faithfully capture the necessary Gelfand theory in dimension 2 and 3 as well as generalize it to arbitrary dimension by replacing the exterior algebra of differential forms with Clifford algebras of multivector fields. Note that we do not lose any information with this substitution since the exterior algebra naturally embeds into in any Clifford algebra. Our focus will be on nondegenerate (Euclidean) Clifford algebras $\G$ which we call \emph{(Euclidean) geometric algebras} since they  capture both complex and quaternion algebras as spinor subalgebras $\G^+\subset \G$ when defined on $\R^2$ and $\R^3$ resectively. Many of the nice properties of complex and quaternion valued functions can be generalized to higher dimensions by replacing them with spinor fields $f_+ \in \smoothfields{M}{\G^+}$ which are sections of the bundle of the spinor subalgebras.

A complex function on a surface can be written as a sum of even grade forms, specifically, a 0-form $\omega$ and 2-form $\eta$. This function $\omega + \eta$ is holomorphic if and only if the generalized Cauchy--Riemann equation $d \omega = -\delta \eta$ where $d$ is the exterior derivative and $\delta$ is the codifferential is satisfied. Such $\omega$ and $\eta$ are called a conjugate pair. The same is true for a harmonic quaternion fields. Instead of looking at pairs of conjugate forms, we consider spinor fields in the kernel of the Hodge--Dirac operator $\grad$. This operator is fundamental in Clifford analysis and it is equivalent to operator $d+\delta$ on forms. Spinor fields in the kernel of $\grad$ are not restricted to being only pairwise conjugate and the associated Cauchy--Riemann equations are a bit more general in dimensions higher than three. We refer to such fields in the kernel of the Hodge--Dirac operator as \emph{monogenic}. 

As a side note, if we consider fields of only a single grade, if they are monogenic then they correspond directly to harmonic fields. This lands us in the realm of Hodge theory where it is a central result that for each grade $k$, the space of harmonic fields is finite dimensional. Fundamentally, by considering fields consisting of sums of even grades (i.e., a spinor), we find the kernel to be far more rich in content. For example, in $\R^2$, monogenic spinor fields are equivalent to holomorphic functions and in $\R^3$, they are equivalent to harmonic quaternion fields and both of these spaces are infinite in dimension. The real benefit of using Clifford algebras is that it allows us to be dimension-agnostic like we can with differential forms, but recover many of the staple complex analysis theorems in any given dimension as well.

Within this framework, we prove a Gelfand representation for $n$-dimensional compact regions of $\R^n$ and define the corresponding Gelfand transform. Here, the spectrum consists of a certain type of functionals which we call \emph{$\G$-currents} that act on the space of monogenic spinor fields. These functionals are valued in the base geometric algebra and respect the structure of certain subalgebras nested inside the space of monogenic spinor fields. Furthermore, we report a Stone--Weierstrass theorem showing that the algebra generated by the closure of the monogenic fields is dense in the space of continuous fields. Both of the above results answer questions posed by Belishev and Vakulenko in \cite{belishev_algebras_2020}. 

The construction we use for the spectrum bootstraps from our knowledge of surfaces: given a surface $S$, the monogenic spinor fields on the surface $\monogenics^+(S)$ are a commutative Banach algebra isomorphic to the algebra of holomorphic functions. When the dimension of the manifold $M$ exceeds two, the space $\monogenics^+(M)$ is not an algebra since products of monogenic fields need not be monogenic. At a local scale, a monogenic spinor $f_+\in \monogenics^+(M)$ is, in essence, built out of a monogenic fields propagated off of surfaces embedded in $M$. In fact, we can think of these fields as a local set of variables and this lets us write power series for $f_+$ locally in these variables. We find that these variables are direct analogs to the variable $z$ in complex analysis. 

The key challenge we face in this paper is to define a notion of a spectrum for the space $\monogenics^+(M)$. In the case of a surface $S$, characters $\delta$ in the spectrum $\characters(S)$ are the continuous algebra morphisms $\characters(S) \to \C$. Since $\monogenics^+(M)$ is not an algebra but is a $\G^+$-module, we define characters $\characters(M)$ to be continuous module morphisms $\monogenics^+(M) \to \G^+$. Furthermore, we require that characters are also algebra morphisms on subalgebras $\algebra{\bivector}$ (which are akin to $\monogenics^+(S)$ for $S$ a subsurface in $M$) to a an algebra $\planespinors$ (which is isomorphic to $\C$). Applying such characters to the power series representation for $f_+$ in terms of the $z$ variables yields our main theorem.
\begin{theorem}
\label{thm:gelfand_intro}
Let $M$ be a compact region in $\R^n$. For any $\delta \in \characters(M)$, there is a point $\blade{x}_\delta \in M$ such that $\delta(f) = f(\blade{x}_\delta)$ for any $f_+\in \monogenics^+(M)$ a monogenic field. Given the weak-$\ast$ topology on the space of $\G$-currents, the map
\[
\Gamma \colon \characters(M) \to M, \quad \delta \mapsto \blade{x}_\delta
\]
is a homeomorphism. The Gelfand transform $\monogenics^+(M) \to \contfields{\characters(M)}{\G^+}$ given by $\widehat{f_+}(\delta) = \delta[f_+]$ is an isometric isomorphism onto its image so that $\monogenics^+(M)\cong \widehat{\monogenics^+(M)}$.
\end{theorem}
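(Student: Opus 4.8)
The plan is to run the classical Gelfand argument "one commutative subalgebra at a time" and then promote the resulting point--correspondence to a homeomorphism by a weak-$\ast$ compactness argument.

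Fix $\delta\in\characters(M)$. For each bivector $\bivector$, the subalgebra $\algebra{\bivector}\subset\monogenics^+(M)$ is a commutative Banach algebra isomorphic to the algebra of holomorphic functions on the $2$-dimensional slice $S_\bivector\subset M$ cut out by the $\bivector$-plane, and by hypothesis $\delta$ restricts to a unital algebra morphism $\algebra{\bivector}\to\planespinors\cong\C$. Classical Gelfand theory for such planar function algebras then identifies $\delta|_{\algebra{\bivector}}$ with evaluation at a point $p_\bivector\in S_\bivector\subseteq M$. Since $\delta$ is a single $\G^+$-module morphism on all of $\monogenics^+(M)$, its values on the distinguished monogenic "coordinate" fields (the local $z$-variables, one attached to each plane direction, whose scalar parts recover the ambient coordinates) are determined independently of which $\algebra{\bivector}$ one uses; consistency of these values forces the $p_\bivector$ to be one and the same point $\blade{x}_\delta\in M$, and this point is unique because $\monogenics^+(M)$ separates the points of $M$ (a consequence of the Stone--Weierstrass theorem proved later).

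To see that $\delta(f_+)=f_+(\blade{x}_\delta)$ for \emph{every} monogenic $f_+$, not just for elements of the subalgebras, I would expand $f_+$ in its local power series in the $z$-variables about $\blade{x}_\delta$. Each $z$-variable lies in some $\algebra{\bivector}$, on which $\delta$ is multiplicative and satisfies $\delta(z)=z(\blade{x}_\delta)$; feeding the expansion through the continuous functional $\delta$, and invoking unique continuation for monogenic fields to see that $\delta(f_+)$ depends only on the germ of $f_+$ at $\blade{x}_\delta$, yields the claimed identity. This is the crux of the argument and the main obstacle: one must show that the module and subalgebra structure alone suffice to pin down $\delta$ on the full, non-algebra space $\monogenics^+(M)$, which amounts to controlling $\delta$ on series whose individual monomials (products of $z$-variables attached to different planes) need not themselves be monogenic, and to checking that the termwise manipulation is legitimate given the boundedness of $\delta$ and the convergence of the expansion.

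It remains to verify that $\Gamma\colon\delta\mapsto\blade{x}_\delta$ is a homeomorphism. Surjectivity is immediate: for $\blade{x}\in M$ the evaluation functional $f_+\mapsto f_+(\blade{x})$ is a continuous $\G^+$-module morphism that is multiplicative on each $\algebra{\bivector}$, hence lies in $\characters(M)$ and maps to $\blade{x}$. Injectivity follows from the previous paragraph, since $\blade{x}_\delta=\blade{x}_{\delta'}$ forces $\delta$ and $\delta'$ to agree on all monogenic fields. Continuity of $\Gamma$ holds because a weak-$\ast$ convergent net $\delta_j\to\delta$ satisfies $f_+(\blade{x}_{\delta_j})=\delta_j(f_+)\to\delta(f_+)=f_+(\blade{x}_\delta)$ for all $f_+$, and since $\monogenics^+(M)$ separates points the topology it induces on the compact space $M$ is its original one, so $\blade{x}_{\delta_j}\to\blade{x}_\delta$. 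Finally $\characters(M)$ is weak-$\ast$ compact --- it is a bounded, weak-$\ast$ closed subset of the space of $\G$-currents, the conditions defining a character (module morphism, subalgebra multiplicativity, normalization) all being weak-$\ast$ closed --- so the continuous bijection $\Gamma$ from a compact space onto the Hausdorff space $M$ is a homeomorphism. For the Gelfand transform, $\widehat{f_+}(\delta)=\delta[f_+]=f_+(\blade{x}_\delta)$ gives $\|\widehat{f_+}\|_\infty=\sup_{\blade{x}\in M}|f_+(\blade{x})|=\|f_+\|$, so it is an isometry; it is $\G^+$-module linear by construction, injective because isometric, and its image is closed because $\monogenics^+(M)$ is complete, whence $\monogenics^+(M)\cong\widehat{\monogenics^+(M)}$.
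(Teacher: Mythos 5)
Your overall strategy is the paper's: reduce $\delta$ to its action on the $z$-variables, use consistency across planes to assemble a single point, and transfer the topology via weak-$\ast$ continuity. But there are two genuine gaps. First, the claim that ``classical Gelfand theory'' places the point $p_\bivector$ in $S_\bivector\subseteq M$ is unjustified, and it is exactly the step the paper must work for. An element of $\algebra{\bivector}(M)$ is constant in the directions orthogonal to the plane of $\bivector$, so the Gelfand spectrum of this commutative Banach algebra only sees two coordinates and, more importantly, need not coincide with (the projection of) $M$: for the algebra generated by the polynomial-type variables $z$, the spectrum is a priori a hull-like set, so the point produced by your argument could lie outside $M$ (think of $M$ with a hole, where evaluation at an interior hole point is multiplicative on all polynomials). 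The paper closes this with a separate argument (its Lemma on identification): for $\blade{x}_0\notin M$ the field $\blade{G}(\blade{x}-\blade{x}_0)\blade{e}_1$ built from the Green's function of $\grad$ lies in $\monogenics^+(M)$, and a continuity argument with a sequence of such kernels rules out $\blade{x}_\delta\notin M$. These kernel fields do not live in any single $\algebra{\bivector}(M)$, so no per-subalgebra Gelfand argument can see them; your proposal contains no substitute for this step.

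Second, your extension from the $z$-variables to all of $\monogenics^+(M)$ is not carried out. Expanding $f_+$ in a \emph{local} power series about $\blade{x}_\delta$ and invoking ``unique continuation'' does not justify applying $\delta$ termwise: $\delta$ is continuous only for the uniform norm on $M$, and a local expansion does not converge uniformly on $M$, nor is there any reason a continuous current should depend only on the germ of $f_+$ at a point before one already knows it is evaluation there. The paper resolves precisely this obstacle by working on a ball $\ball\supset M$, where Ryan's theorem gives a power series in the $z_{ij}$ with coefficients in $\G^+$ converging uniformly on $M$, and Calderbank's approximation theorem gives density of restrictions from $\ball$; continuity and right $\G^+$-linearity of $\delta$ then legitimize the termwise computation, and multiplicativity over the \emph{collection} $\algebra{}(M)$ (which by definition covers products of variables attached to different planes) handles the mixed monomials you flag as problematic. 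Finally, a smaller point: your compactness claim for $\characters(M)$ (``bounded and weak-$\ast$ closed'') assumes a uniform norm bound on characters that you never establish; it is simpler to note that once every character is a point evaluation, $\characters(M)$ is the image of the compact set $M$ under the weak-$\ast$ continuous map $\blade{x}\mapsto\delta_{\blade{x}}$, which is the route the paper takes.
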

Furthermore, we find that the closure $\overline{\monogenics^+(M)}$ separates points which follows from the fact from Calderbank's thesis \cite{calderbank_geometrical_1995}. Namely, if we know $f_+ \in \monogenics^+(M)$ locally, we can uniformly extend this to a unique field on all of $M$. Given this and a theorem from Laville and Ramadanoff's paper \cite{laville_stone-weierstrass_1996}, we prove the following Stone--Weierstass result.
\begin{theorem}
\label{thm:stone_weierstrass_intro}
Let $\vee \overline{\monogenics^+(M)}$ represent the minimal algebra generated by $\overline{\monogenics^+(M)}$. Then $\vee \overline{\monogenics^+(M)}$ is dense in $\contfields{M}{\G^+}$.
\end{theorem}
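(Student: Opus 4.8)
The plan is to derive Theorem~\ref{thm:stone_weierstrass_intro} as a consequence of the classical Stone--Weierstrass theorem for algebras of continuous functions, combined with the two ingredients the introduction flags: the unique continuation / uniform extension property coming from Calderbank's thesis \cite{calderbank_geometrical_1995}, and the Laville--Ramadanoff result \cite{laville_stone-weierstrass_1996}. First I would observe that $\contfields{M}{\G^+}$ is, as a topological algebra, isometrically a finite copy of $\contfields{M}{\R}$ indexed by a basis of the spinor subalgebra $\G^+$; more usefully, $\vee\overline{\monogenics^+(M)}$ is a (noncommutative, in general) subalgebra of $\contfields{M}{\G^+}$ that contains the constants, since the constant spinor fields are monogenic. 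So the scaffolding I need is a Stone--Weierstrass statement valid for subalgebras of $\contfields{M}{A}$ for a finite-dimensional real algebra $A$; this is exactly the kind of statement in \cite{laville_stone-weierstrass_1996}, whose hypothesis is that the subalgebra contains constants and separates points of $M$.

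The crux is therefore the separation-of-points step. Here I would argue as follows: take two distinct points $x,y\in M$. If $M$ has nonempty interior in $\R^n$ (the ambient case of Theorem~\ref{thm:gelfand_intro}) I can produce a monogenic spinor field distinguishing them directly — e.g.\ one of the $z$-type local monogenic variables described in the introduction, which are the Clifford analogs of the complex coordinate $z$, pushed to a global monogenic field by the uniform extension property. For the general compact $M$ with boundary I would instead invoke that $\overline{\monogenics^+(M)}$ separates points, which the introduction attributes to Calderbank: given $x\ne y$, choose a small embedded surface $S\subset M$ through $x$ but not through $y$ (or on which the local monogenic coordinate takes different values), build a monogenic field on $S$ that separates, and use the unique continuation result to extend it to a field in $\overline{\monogenics^+(M)}$ whose values at $x$ and $y$ still differ. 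Feeding ``contains constants'' and ``separates points'' into the Laville--Ramadanoff Stone--Weierstrass theorem then gives density of $\vee\overline{\monogenics^+(M)}$ in $\contfields{M}{\G^+}$, completing the proof.

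I would organize the write-up in three short steps: (1) record that $\vee\overline{\monogenics^+(M)}$ is a unital subalgebra of $\contfields{M}{\G^+}$ (closure under the Clifford/geometric product of fields is automatic once we pass to the generated algebra $\vee$, and unitality is because $1\in\monogenics^+(M)$); (2) prove the separation lemma using unique continuation from \cite{calderbank_geometrical_1995}, stated as its own lemma since it is the only nonformal input; (3) cite \cite{laville_stone-weierstrass_1996} to conclude. The main obstacle I anticipate is entirely in step (2): making the separation argument clean for a general compact Riemannian $M$ with boundary, where ``local monogenic coordinates'' are only defined on charts and one must check that the unique global extension genuinely retains the separating behavior — in particular that two points lying in a common chart can be separated, and that points in different charts can be handled by a partition/connectedness argument. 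If the ambient $\R^n$ case is all that is needed for a clean statement, step (2) collapses to exhibiting the global coordinate-type monogenic fields directly, and the whole proof is a one-paragraph corollary of \cite{laville_stone-weierstrass_1996}.
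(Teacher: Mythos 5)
Your overall route is the same as the paper's: show $\overline{\monogenics^+(M)}$ contains the constants and separates points, then feed this into Laville--Ramadanoff \cite{laville_stone-weierstrass_1996}. The genuine gap is in your separation step for general compact $M$. The input from Calderbank is \cref{thm:calderbank}, which is a Runge-type \emph{approximation} theorem: monogenic fields on an \emph{open} subset $U\subset M$ can be locally uniformly approximated by restrictions of monogenic fields on $M$. It is not a unique-continuation or extension statement, and it does not apply to a field defined only on a $2$-dimensional embedded surface $S\subset M$, which is not open once $\dim M>2$; a field that is monogenic for the induced Dirac operator of $S$ is simply not an object the theorem speaks about. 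Moreover, choosing $S$ through $x$ but not through $y$ cannot work even if an extension existed: the field you build has no prescribed value at $y$, so nothing forces the extended or approximating global field to take different values at $x$ and $y$. Your fallback ``partition/connectedness argument'' for points in different charts is precisely the missing idea, not a routine patch.

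The paper's fix is to produce one open set containing both points on which a separating monogenic field exists: join $x,y\in\interior M$ by a shortest geodesic $\gamma$ (which is $C^1$ by \cite{alexander_geodesics_1981}), thicken it to an open tube $\mathbb{T}_\gamma$ using a uniform positive lower bound on the injectivity radius, parallel-transport a unit $2$-blade along $\gamma$ and then in the normal directions to obtain a blade field $\bivector$ on the tube, and define the subsurface-type field $z\in\algebra{\bivector}(\mathbb{T}_\gamma)$ as in \cref{eq:z}. Then $z(x)\neq z(y)$, $z$ is monogenic on the open set $\mathbb{T}_\gamma$, and \cref{thm:calderbank} plus a uniform limit gives $f\in\overline{\monogenics^+(M)}$ with $f(x)\neq f(y)$. (In the embedded $\R^n$ case your direct use of the global coordinate fields $z_{ij}$ is fine, but the theorem is asserted for arbitrary compact $M$ with boundary.) One smaller omission: the Laville--Ramadanoff theorem requires, beyond unitality and separation, invariance under the principal involution; the paper checks this by noting that spinor fields have only even-grade components, and your write-up should do the same before citing their result.
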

Lastly, I hope that others find that this work warrants further investigation on the utility of $C^\ast$- and Banach-algebras of Clifford algebra-valued functions and to this end I will finish the paper with a discussion of other questions addressed by Belishev and Vakulenko and their relation to this work and the Calder\'on problem. It is certainly a worthy endeavor to consider what extent these theorems can be used when the information is extracted solely from Dirichlet-to-Neumann operators.

\subsection*{Acknlowedgements:}
Without the unending guidance and input from my advisor Dr. Clayton Shonkwiler, none of this work would be possible. I deeply appreciate your help and your willingness to follow me on my own mathematical journey over these past few years. Thank you.

\section{Preliminaries}
\label{sec:preliminaries}

We address only the necessary preliminaries here, though more detail can be found in various sources, for instance from Doran and Lasenby \cite{doran_geometric_2003}, and Hestenes and Sobczyk \cite{hestenes_clifford_1984}. Our preliminaries will include the basics of Clifford algebras with a large motivating example, foundations of the Clifford analysis of the Hodge--Dirac operator, and the notion of algebraic currents.

\subsection{Clifford Algebras}
\label{subsec:clifford_algebras}

To construct any Clifford algebra, take an $n$-dimensional vector space $V$ over a field $\mathbb{F}$ with quadratic form $Q$. By the polarization identity, given a $Q$ there is a unique corresponding symmetric bilinear form $g$. For more information on vector spaces with quadratic or bilinear forms, please see \cite{roman_metric_2008}.

The tensor algebra is given by $\bigoplus_{n \in \mathbb{N}} V^{\otimes^n}$ and we form the Clifford algebra $\clifford(V,Q)$ by the quotient
\begin{equation}
\label{eq:ideal_quotient}
\clifford(V,Q) \coloneqq \bigoplus_{n \in \mathbb{N}} V^{\otimes^n} ~ / ~ \langle \blade{v} \otimes \blade{v} - Q(\blade{v}) \rangle
\end{equation}
with the induced addition and multiplication from the tensor algebra. For sake of clarity, we will think of $\clifford(V,Q)=\clifford(V,g)$ as needed, though most define Clifford algebras only using quadratic forms. Elements of $\clifford(V,Q)$ are referred to as \emph{multivectors}. 

When we take the totally singular form $Q=0$, the corresponding Clifford algebra is the exterior algebra $\bigwedge(V) = \clifford(V,0)$, and otherwise $\bigwedge(V)\subseteq C\ell(V,Q)$ as a subalgebra. Note that $\clifford(V,Q)$ is a $\mathbb{F}$-vector space of dimension $2^{n}$. For the remainder of this paper, we take $\mathbb{F}=\R$.

Given linearly independent vectors $\blade{v}_1,\dots,\blade{v}_k$ and the exterior product $\wedge$, an element of the form
\begin{equation}
    \blade{A_k} = \blade{v}_1 \wedge \cdots \wedge \blade{v}_k
\end{equation}
is a \emph{$k$-blade}, which are the simplest multivectors. Using the $\wedge$ inherently removes lower grade elements and keeps only the highest grade element (grade-$k$) of the product $A=\blade{v}_1\blade{v}_2\cdots \blade{v}_k$ since, at the very least, the product of two vectors yields \cref{eq:product_of_vectors}. This multivector $A$, often called a \emph{versor} since it is a product of vectors, is a sum of different graded elements called \emph{$k$-vectors}. A $k$-vector is a linear combination of $k$-blades and we denote this subspace of grade-$k$ elements by $\clifford^k(V,Q)$. Therefore, we have the direct sum decomposition
\begin{equation}
    \label{eq:grade_decomposition}
    \clifford(V,Q)= \bigoplus_{k=0}^n \clifford^k(V,Q).
\end{equation}
Some may refer to $k$-blades as \emph{simple} or \emph{decomposable} $k$-vectors as they correspond to rank-1 tensors in the tensor algebra. Moreover, they are also representative of subspaces which we discuss later. We write $\clifford^{k\oplus \ell}(V,Q)$ to represent the direct sum space of $k$- and $\ell$-vectors. A basis $\blade{e}_i$ of $V$ induces a $k$-blade basis by taking $\mathcal{I}=\{i_1,\dots,i_k\}$ to be a list of increasing indices $i_1 < \cdots < i_k$ and putting
\begin{equation}
\label{eq:basis_blades}
\blade{E}_\mathcal{I} \coloneqq \blade{e}_{i_1}\wedge \cdots \wedge \blade{e}_{i_k}.
\end{equation}

Some graded elements have special names. We say grade-0 objects are \emph{scalars}, grade-1 are \emph{vectors}, grade-2 are \emph{bivectors}, and grade-$n$ objects are \emph{pseudoscalars}. Typically, objects of grade-$(n-k)$ receive the prefix ``pseudo'', for example, we have pseudoscalars of grade-$(n-0)$ and \emph{pseudovectors} of grade-$(n-1)$. Given the direct sum decomposition in \cref{eq:grade_decomposition}, a multivector $A\in \clifford(V,Q)$ is given by
\begin{equation}
    \label{eq:grade_decomp_of_multivector}
A = \sum_{k=0}^n A_k
\end{equation}
where $A_k \in \clifford(V,Q)^k$.

Multivectors also split into even and odd grades. We will work with the even-graded elements called \emph{spinors} as they form their own subalgebra $\clifford^+(V,Q)$. Due to this, some refer to $\clifford(V,Q)$ as a \emph{superalgebra} since there is this $\mathbb{Z}/2\mathbb{Z}$-splitting. Spinors may also be defined slightly more generally (see \cite{calderbank_geometrical_1995}), but this notion suffices for this paper.

One purpose of using a Clifford algebra $\clifford(V,Q)$ is to extend the exterior algebra $\bigwedge(V)$ to include a useful interior multiplication. Given $\blade{v},\blade{w}\in \clifford(V,Q)^1$, their product splits as
\begin{equation}
\label{eq:product_of_vectors}
    \blade{v}\blade{w}= \underbrace{\blade{v}\cdot \blade{w}}_{\textrm{grade-0}} +\underbrace{\blade{v}\wedge \blade{w}}_{\textrm{grade-2}},
\end{equation}
where $\cdot$ is the \emph{interior product}. However, for general Clifford algebras, we may have \emph{degenerate} vectors $\blade{v}$ such that for any other vector $\blade{w}$, $\blade{v}\cdot \blade{w}=0$. We will want to rid of this case.

To remove degenerate vectors from the algebra, we can force the quadratic form $Q$ to be completely nonsingular. We refer to such a Clifford algebra with nonsingular $Q$ as a \emph{geometric algebra} and we write $\G = \clifford(V,Q)$ to denote such an algebra. To see more reasoning of calling such algebras ``geometric'', we again refer the reader to the chapter \cite{roman_metric_2008}. Using the vector space basis, we can determine the coefficients of the bilinear form by
\begin{equation}
g_{ij}=g(\blade{e}_i,\blade{e}_j)=\blade{e}_i \cdot \blade{e}_j.
\end{equation}
Let us quickly remark that if we were considering a Clifford algebra that was not a geometric algebra, then the matrix $g_{ij}$ of the bilinear form would be singular. We will carry on the rest of this paper working solely with geometric algebras for this reason, so the reader can assume that we take $g$ whose matrix representations $g_{ij}$ are full rank and symmetric.

Given a $k$-vector $A_k$ and an $\ell$-vector $B_\ell$, the product is
\begin{equation}
\label{eq:general_clifford_multiplication}
A_k B_\ell = \proj{|k-\ell|}{A_k B_\ell} + \proj{|k-\ell|+2}{A_k B_\ell} + \cdots + \proj{k+\ell}{A_k B_\ell},
\end{equation}
where the brackets $\proj{k}{-} \colon \G \to \G^k$ denote projection into the grade-$k$ subspace by
\begin{equation}
\proj{k}{A}=A_k
\end{equation}
when $A$ is given by \cref{eq:grade_decomp_of_multivector}. We define the \emph{(left) contraction} $A_k \contract B_\ell \coloneqq \proj{\ell-k}{A_k B_\ell}$. In general, the lowest grade term of $A_kB_\ell$ is the interior product $A_k \cdot B_\ell = \proj{|k-\ell|}{A_kB_\ell}$ and the exterior product $\wedge$ is the highest grade term of the product so that $A_k \wedge B_\ell = \proj{k+\ell}{A_k B_\ell}$. For a vector $\blade{v}$ we have $\blade{v}\cdot A = \blade{v}\contract A$ so many equations can be written with either $\cdot$ or $\contract$. Most will be written with $\contract$ as it is algebraically and geometrically more convenient. For notational simplicity, we also remove the subscript when projecting into the scalar subspace, $\proj{}{-}=\proj{0}{-}$, but this should not be confused with the notation for the ideal generated by a relation used only in \cref{eq:ideal_quotient}.

The \emph{reciprocal basis vectors $\blade{e}^i$} are those that satisfy $\blade{e}^i \cdot \blade{e}_j = \delta^i_j$. Reciprocal basis elements allow us to use the Riesz representation in order to avoid extraneous use of dual space elements since we are able to capture this functionality through the interior product. For sake of clarity, $\blade{e}^i \cdot \blade{e}^j = g^{ij}$ is the matrix inverse to $g_{ij}$ and $\blade{e}^i = g^{ij} \blade{e}_j$ are just the ``raised up'' indices. For a basis blade $\blade{E}_\mathcal{I}$, the reciprocal blade is $\blade{E}^\mathcal{I}$ and it satisfies the equation $\blade{E}^\mathcal{I} \cdot \blade{E}_\mathcal{J}= \delta^\mathcal{I}_\mathcal{J}$ where $\delta^\mathcal{I}_\mathcal{J}=1$ only when the sets of indices $\mathcal{I}$ and $\mathcal{J}$ are identical.

Geometric algebras have a bilinear product $\G \times \G \to \R$ called the \emph{multivector inner product} which is given by
\begin{equation}
(A,B) \coloneqq \proj{}{A^\dagger B}.
\end{equation}
This equation is given in terms of the \emph{reverse operator} $\dagger$ which for $\lambda \in \R$ satisfies
\begin{equation}
(A+B)^\dagger=A^\dagger + B^\dagger, \quad (\lambda A)^\dagger = \lambda^\dagger A^\dagger = \lambda A^\dagger, \quad A^{\dagger \dagger}=A, \quad (AB)^\dagger = B^\dagger A^\dagger,
\end{equation}
and on a versor we have
\begin{equation}
    (\blade{v}_1\blade{v}_2 \cdots \blade{v}_k)^\dagger \coloneqq \blade{v}_k \cdots \blade{v}_2 \blade{v}_1.
\end{equation}
Note that $\dagger$ acts as the adjoint in the product $(-,-)$ which follows from the cyclic property of the scalar grade projection \cite[eq. (138)]{chisolm_geometric_2012}. To see this, we take another multivector $C$ and note
\begin{align}
(CA,B) = \proj{}{(CA)^\dagger B} = \proj{}{ A^\dagger C^\dagger B } = (A,C^\dagger B),
\end{align}
We define a semi-norm $|-|^2\coloneqq (-,-)$ called the \emph{multivector norm}. If $|A|=\pm 1$ we say that $A$ is \emph{unit}. It is worth saying that for a multivector field written in terms of basis blades $f=\sum_{\mathcal{I}} f_\mathcal{I} \blade{E}_\mathcal{I}$ that
\begin{equation}
\label{eq:inner_product_with_basis}
f_\mathcal{I} = (f, \blade{E}^\mathcal{I}),
\end{equation}
so long as the quadratic form is definite (e.g., the quadratic form is the Euclidean norm). We also have that
\begin{equation}
\blade{E}^\mathcal{I} = (\blade{e}^{i_1} \wedge \cdots \wedge \blade{e}^{i_k})^\dagger.
\end{equation}

There exists a vector basis for $V$ where $p$ vectors square to $-1$ and $q$ vectors square to $+1$ and $p+q=n$. The corresponding geometric algebra is often written as $\G_{p,q}$. We will focus most on the the case where $Q$ is positive definite and to distinguish this, we can write $\G_n$ if it is necessary. For $\G_n$, the multivector inner product and multivector norm are both positive definite. One can see that the multivector inner product treats the space $\G_n$ as a $2^n$-dimensional inner product space with a basis given by the blades $\blade{E}_\mathcal{I}$. \Cref{eq:inner_product_with_basis} just specifies that we have chosen a set of blades orthonormal with respect to the multivector inner product.

If the basis $\blade{e}_i$ is orthonormal in $V$, then the set of basis blades $\blade{E}_\mathcal{I}$ are orthonormal versors in $\G_n$ since
\begin{equation}
\blade{E}_\mathcal{I} = \blade{e}_{i_1}\wedge \cdots \wedge \blade{e}_{i_k} = \blade{e}_{i_1} \blade{e}_{i_2} \cdots \blade{e}_{i_k}.
\end{equation}
Their products become much clearer to compute. We have
\begin{equation}
\blade{E}_\mathcal{I} \blade{E}_\mathcal{J} =  \pm \blade{E}_{\mathcal{I}\triangle \mathcal{J}},
\end{equation}
where $\triangle$ is the symmetric difference of the sets $\mathcal{I}$ and $\mathcal{J}$ and the $\pm$ is used solely due to the fact that vectors $\blade{e}_i$ comprising the versors $\blade{E}_\mathcal{I}$ may need to be swapped and
\begin{equation}
-\blade{E}_\mathcal{I} = \blade{e}_{i_1} \blade{e}_{i_2} \cdots \blade{e}_{i_{j+1}} \blade{e}_{i_j}\cdots \blade{e}_{i_k}.
\end{equation}
For a concrete example, take $\blade{E}_{123}=\blade{e}_1\blade{e}_2\blade{e}_3$ and $\blade{E}_{124}$ both in $\G_n$, then
\begin{equation}
\blade{E}_{123}\blade{E}_{124}= \blade{e}_1\blade{e}_2\blade{e}_3\blade{e}_1\blade{e}_2\blade{e}_4=\blade{e}_1\blade{e}_2\blade{e}_1\blade{e}_2\blade{e}_3\blade{e}_4 = -\blade{e}_1\blade{e}_2\blade{e}_2\blade{e}_1\blade{e}_3\blade{e}_4 = -\blade{e}_1\blade{e}_1\blade{e}_3\blade{e}_4 = -\blade{E}_{34}.
\end{equation}
Using an orthonormal vector basis shows how nicely versors act algebraically. Multiplication is just reduction of words in the characters $\blade{e}_i$ subject to the relations $\blade{e}_i^2=1$ and  $\blade{e}_i\blade{e}_j =- \blade{e}_j\blade{e}_i$ when $i\neq j$.

\begin{remark}
Though we will not cover the content here, it is worth mentioning that versors in a geometric algebra $\G_{p,q}$ form a group under multiplication called the \emph{Clifford group} and the unit versors define the \emph{spin group} $\mathrm{Spin}(p,q)$. The algebra of bivectors in $\G_{p,q}$ with the commutator $[-,-]$ (often written as $\times$ as well) is the Lie algebra $\mathfrak{spin}(p,q)$.
\end{remark}

Geometric algebras also have a unique isomorphism $\perp \colon \G^k \to \G^{n-k}$ and this $\perp$ is equivalent to the Hodge star $\star$ in $\bigwedge(V)$. To construct this isomorphisms, we first take an arbitrary basis for $V$ and build the volume element
\begin{equation}
    \mu \blade{I} \coloneqq \blade{e}_1 \wedge \cdots \wedge \blade{e}_n,
\end{equation}
where the unit $n$-blade $\blade{I}$ is the \emph{unit pseudoscalar} and the scalar $\mu$ represents the volume scaling. Note that $\blade{I}$ represents the subspace $V$ and for geometric algebras it defines the \emph{dual}
\begin{equation}
\label{eq:dual}
A^\perp \coloneqq A\pseudoscalar^{-1}.
\end{equation}
For example, if $\blade{v}$ is a vector then we have $\blade{v}^\perp$ is a pseudovector representing a scaled copy of the hyperplane perpendicular to $\blade{v}$.

Moreover, the dual allows for exchanging products
\begin{equation}
(A\contract B)^\perp = A\wedge B^\perp
\end{equation}
and the exterior product
\begin{equation}
(A\wedge B)^\perp = A\contract B^\perp
\end{equation}
and helps elucidate the geometrical meaning of $\contract$ and $\wedge$ (see \cite{chisolm_geometric_2012}). Finally, it is worth saying that in $\G_n$ we have $\pseudoscalar^{-1}=\pseudoscalar^\dagger$.

Given a $k$-dimensional subspace $U\subset V$ in a space with nonsingular $Q$, we can put $V=U \oplus U^\perp$ where $U^\perp$ is an $n-k$-dimensional subspace. In much the same way, given a unit $k$-blade $\blade{U_k}$ we can find a decomposition of $\pseudoscalar$ by $\pseudoscalar = \blade{U_k}\wedge \blade{U_k}^\perp$. Actually, multiplication in $\G$ allows for projection onto subspaces using this identification.
\begin{definition}
Given an multivector $B$ and unit $k$-blade $\blade{U_k}$, the \emph{projection} onto the subspace $\blade{U_k}$ is
\begin{equation}
\label{eq:projection}
\projection_{\blade{U_k}}(B) \coloneqq (B\contract \blade{U_k}) \blade{U_k}^{-1}.
\end{equation}
\end{definition}
The projection preserves grades $\projection_{\blade{U_k}}(B_\ell)\in \G^\ell$. A specific application of the projection is to view spinors along planes or, eventually, surfaces. This will be our key methodology to bootstrap from complex analysis.
\begin{definition}
Let $\bivector$ be a unit 2-blade, then the space of \emph{plane spinors} are the elements
\begin{equation}
 \planespinors \coloneqq \R \oplus \Span(\bivector).
\end{equation}
\end{definition}
As we will see in the following example, $\G_2^+\cong \C$ and $\G_3^+\cong \mathbb{H}$. Furthermore, for any $\bivector \in \G_n$ we also have $\planespinors \cong \C$. Hence, just like the Grassmannian of planes in $\R^3$, $\mathrm{Gr}(2,3)$, parameterizes copies of $\C$ in $\mathbb{H}$ by choice of imaginary unit, plane spinors $\planespinors$ as a subalgebra of $\G^+_n$ are parameterized by the Grassmannian $\mathrm{Gr}(2,n)$. If you find the following example lacking, then Doran \& Lasenby's text \cite{doran_geometric_2003} is very insightful and filled with great intuition.

\begin{example}
Rather than a sequence of multiple examples, it will prove to be far more illuminating to construct one large example for which most of the preliminaries to this point can be used in a meaningful way. As such, we shall not rule out the utility that other researchers may gain out of using geometric algebras with pseudo inner products even though this paper is predominantly concerned with the positive definite case. The classical example is the \emph{spacetime algebra} defined by taking $V=\R^4$ with a vector basis $\blade{e}_0,\blade{e}_1,\blade{e}_2,\blade{e}_3$ satisfying
\begin{subequations}
\begin{align}
\blade{e}_0 \cdot \blade{e}_0 &= -1\\
\blade{e}_0 \cdot \blade{e}_i &= 0  &i=1,2,3\\
\blade{e}_i \cdot \blade{e}_j &= \delta_{ij}, &i,j=1,2,3.
\end{align}
\end{subequations}
We refer to $\blade{e}_0$ as \emph{temporal} since its square is negative and $\blade{e}_i$ for $i=1,2,3$ are \emph{spatial} since their squares are positive. For this basis, the matrix for this inner product in this basis assumes the form $\eta =\operatorname{diag}(-1,~ +1,~ +1,~+1)$ (often called the \emph{Minkowski metric}). The associated quadratic form $Q$ can be found from $\eta$ by polarization. For a spacetime vector $\blade{v} = v_0 \blade{e}_0 +v_1 \blade{e}_1 + v_2 \blade{e}_2 + v_3 \blade{e}_3$,
\begin{equation}
\label{eq:spacetime_inner_product}
|\blade{v}|^2 = (\blade{v},\blade{v}) = \blade{v} \cdot \blade{v} = -v_0^2 + \sum_{i=1}^3 v_i^2.
\end{equation}
It is clear that the norm is definite when all vectors are spatial, but in the case of spacetime there are null vectors $\blade{c}$ such that $|\blade{c}|=0$. For example, $\blade{c}=\blade{e}_0 + \blade{e}_1$. The collection of null vectors define the light cone in Minkowski space. Also, it is important to distinguish these null vectors from degenerate vectors. Though $\blade{c}$ is null, it is not true that for any $\blade{c}$ and all other vector $\blade{v}$ that $\blade{c} \cdot \blade{v}=0$. This is only true for other $\blade{v}$ on the light cone and the light cone is not a subspace.

As the notation above suggests, the geometric algebra of Euclidean space $\R^3$, $\G_3$, should naturally appear inside of the spacetime algebra. The spatial \emph{trivector} $\blade{e}_1 \blade{e}_2 \blade{e}_3$ is unit
\begin{equation}
|\blade{e}_1 \blade{e}_2 \blade{e}_3| = \sqrt{\proj{}{(\blade{e}_1 \blade{e}_2 \blade{e}_3)^\dagger \blade{e}_1 \blade{e}_2 \blade{e}_3}} = \sqrt{\proj{}{\blade{e}_3 \blade{e}_2 \blade{e}_1 \blade{e}_1 \blade{e}_2 \blade{e}_3}}=1
\end{equation}
and represents the spatial subspace $\mathrm{Span}(\blade{e}_1, \blade{e}_2, \blade{e}_3) \subset \R^4$. With slight abuse of notation, the projection of $\G_{1,3}$ onto this subspace yields
\begin{equation}
\projection_{\blade{e}_1 \blade{e}_2  \blade{e}_3}(\G_{1,3}) = \G_3.
\end{equation}
In $\G_3$, we can specify an arbitrary multivector $A$ by
\begin{equation}
A= a_0 + a_1 \blade{e}_1 + a_2 \blade{e}_2 + a_3 \blade{e}_3 + a_{12} \blade{e}_1\blade{e}_2 + a_{13} \blade{e}_1\blade{e}_3 + a_{23} \blade{e}_2\blade{e}_3 + a_{123} \blade{e}_1 \blade{e}_2 \blade{e}_3.
\end{equation}
It will be pertinent later to define $\bivector_{ij}\coloneqq \blade{e}_i\blade{e}_j$ for $i\neq j$. Using this substitution, the grade projections read
\begin{subequations}
\begin{align}
\proj{}{A}&=a_0\\
\proj{1}{A}&=a_1 \blade{e}_1 + a_2 \blade{e}_2 + a_3 \blade{e}_3\\
\proj{2}{A}&=a_{12} \blade{B}_{12} + a_{13} \blade{B}_{13} + a_{23} \blade{B}_{23}\\
\proj{3}{A}&= a_{123} \blade{e}_1  \blade{e}_2 \blade{e}_3.
\end{align}
\end{subequations}
Hence, we can write a spinor as
\begin{equation}
A_+ = a_0 + a_{12} \blade{B}_{12} + a_{13} \blade{B}_{13} + a_{23}\blade{B}_{23}.
\end{equation}
Note as well that the spatial unit 2-blades always satisfy
\begin{align}
\blade{B}_{23}^2 = \blade{B}_{13}^2 = \blade{B}_{12}^2 = -1
\end{align}
and we find that
\begin{equation}
\blade{B}_{23}\blade{B}_{13}\blade{B}_{12} = -1.
\end{equation}
Hence, the even subalgebra $\G_3^+$ isomorphic to the quaternion algebra $\quat$ by
\begin{equation}
\mathbf{i} \leftrightarrow \blade{B}_{23}, \quad \mathbf{j} \leftrightarrow \blade{B}_{13}, \quad \mathbf{k} \leftrightarrow \blade{B}_{12}
\end{equation}
Given a quaternion, there is an equivalent spinor $A_+$; the imaginary part of the quaternion corresponds to the grade two part of the spinor $\proj{2}{A_+}$.

We can project down one dimension further by $\projection_{\bivector_{12}} (\G_3) = \G_2$ and
we can verify quickly that
\begin{subequations}
\begin{align}
    \projection_{\bivector_{12}}(A) =  a_0 + a_1 \blade{e}_1 + a_2 \blade{e}_2 + a_{12} \bivector_{12}.
\end{align}
\end{subequations}
Given that $\blade{B}_{12}^2=-1$ we can put $z \coloneqq x + y \blade{B}_{12} \in \G_2^+$ for $x,y\in \R$ which is exactly a representation of the complex number $\zeta = x+ \mathbf{i}y$ in $\C$ and $\mathbf{i}$ here can be thought of as the unit pseudoscalar in the plane. Again, the imaginary part is $\proj{2}{z}$.

But, the above work is not special to the starting point of $\G_{1,3}$ or $\G_3$. In fact, if we take $\G_n$ for $n\geq 2$, then there are natural copies of $\C$ contained inside of $\G_n$. In particular, we have the isomorphism
\begin{equation}
\label{eq:c_isomorphisms}
    \C \cong \{x + y \blade{B} ~\vert~ x,y \in \R,~ \blade{B} \in \Grassmannian{2}{n}. \},
\end{equation}
which shows that complex numbers arise as plane spinors via the representation $\zeta = x + y\blade{B}$. Given the standard basis $\blade{e}_1,\dots,\blade{e}_n$ we have the ${ n \choose 2}$ unit bivectors $\blade{B}_{ij}$ for $j=1,\dots,n$ and $i<j$. The plane spinors $\mathbb{A}_{\bivector_{ij}}$ are each isomorphic to $\C$.
\end{example}

\subsection{Clifford Analysis}
\label{subsec:clifford_analysis}

Given a smooth semi-Riemannian manifold $M$ with metric tensor field $g$ and boundary $\partial M$, each tangent space $T_xM$ can be made into a geometric algebra $\G_x M \coloneqq C\ell(T_xM,g_x)$ and we call $\G_xM$ the \emph{geometric tangent space}. The geometric tangent spaces are glued together to form the geometric algebra bundle $\G M\coloneqq \bigsqcup_{x\in M}\G_x M$. We will call the $C^\infty$ sections of this bundle $\smoothfields{M}{\G}$ the \emph{smooth multivector fields} and the continuous sections $\contfields{M}{\G}$ the \emph{continuous multivector fields}. Let $\nabla$ be the Levi--Civita connection on $M$. Then for a vector field $\blade{v}$ we have the covariant derivative $\nabla_{\blade{v}}$ which is extended to multivector fields, e.g., by \cite{schindler_geometric_2020}. Given local coordinates $x^i$ on $M$ we have the induced (gradient) basis $\blade{e}_i(x)\in \G_xM$. Suppressing the pointwise notation, $\blade{e}_i\cdot \blade{e}_j = g_{ij}$. Also in the tangent space are the reciprocal vectors $\blade{e}^i$ which are Riesz representatives corresponding to the dual basis $dx^i$ since $dx^i(\blade{e}_j) = \blade{e}^i \cdot \blade{e}_j$. We will not get rid of the dual basis entirely since we still find use for the elements as coordinate measures in integration. Basis blades $\blade{E}_\mathcal{I}$ and their reciprocals $\blade{E}^\mathcal{I}$ carry over to each geometric tangent space as well.

The \emph{Hodge--Dirac operator} (or \emph{gradient}) $\grad$ in these coordinates is
\begin{equation}
    \grad \coloneqq \sum_{i=1}^n \blade{e}^i \nabla_{\blade{e}_i}.
\end{equation}
This derivative acts algebraically as a vector and
\begin{equation}
\label{eq:grad_of_planespinor}
\grad f = \grad \contract f + \grad \wedge f.
\end{equation}
In particular, $\grad \wedge$ is equivalent to the exterior derivative $d$ on differential forms $\Omega(M)$, $\grad \contract$ is equivalent to the codifferential $\delta$, and $\grad^2 = \Delta$ is the Laplace--Beltrami operator. The kernel of $d+\delta$ on $\Omega^k(M)$ is coupled to the (co)homology of $M$ and so $\grad$ retains this property as well. This relationship of the analysis of $d+\delta$ (and equivalently $\grad$) to the absolute and relative (co)homology of $M$ is formalized in Hodge theory and is useful for proving existence and uniqueness for boundary value problems \cite{schwarz_hodge_1995}. However, in Clifford analysis, it is commonplace to consider mixed grade multivectors also in the kernel of $\grad$. This space is far bigger since we find that grades can ``mix'' together. For example, take $M$ to be the unit disk $\mathbb{D} \subset \R^2$ and $f_+ = f_0 + f_2 \bivector$ with $f_0,f_2 \in \smoothfields{\mathbb{D}}{\R}$ where $\bivector$ is the unit 2-blade field. Then we have
\begin{equation}
\grad f_+ = \grad \wedge f_0 + \grad \contract ( f_2 \bivector).
\end{equation}
If we considered only singly graded elements such as the scalar fields $f_0$ or bivector fields $f_2 \bivector$ on their own, then the only elements in the kernel of $\grad$ are constant fields. On the other hand, when we combine them together into a spinor field $f_+ \in \smoothfields{\mathbb{D}}{\G^+}$ then this $f_+$ now be any holomorphic function such as $z=x^1+x^2\bivector$ or even any power series in $z$ which we will revisit later. Since this space is far bigger, it can be used to extract more topological data about $M$, namely the homeomorphism type, as we see in \cref{thm:gelfand}.

\begin{definition}
Let $f\in \smoothfields{M}{\G}$, then we say that $f$ is \emph{monogenic} if $\grad f = 0$. We denote the space of monogenic fields by $\monogenics(M)$.
\end{definition}

Monogenic fields are the emphasis of Clifford analysis and many of the theorems of holomorphic functions in complex analysis apply to these fields. Once again, on the unit disk $\mathbb{D}$ take $f_+ = f_0 + f_2 \bivector$ then if $\grad f_+ =0$ we can use \cref{eq:grad_of_planespinor} to derive the Cauchy--Riemann equations
\begin{align}
\label{eq:cauchy_riemann_equations}
\frac{\partial f_0}{\partial x^1} &= \frac{\partial f_2}{\partial x^2} & \frac{\partial f_0}{\partial x^2} &= -\frac{\partial f_2}{\partial x^1}
\end{align}
which shows the function $z$ is indeed monogenic. For more detail on the relationship of monogenic spinor fields to complex holomorphic functions see Doran and Lasenby \cite[\S 6.3.1]{doran_geometric_2003}.

Since $\grad$ is grade-1, we have $\grad \colon \smoothfields{M}{\G^{\pm}} \to \smoothfields{M}{\G^\mp}$ which yields the direct sum decomposition
\begin{equation}
    \monogenics(M) = \monogenics^+(M) \oplus \monogenics^-(M).
\end{equation}
We can note that each of the graded components of $f_+\in \monogenics^+(M)$ are also harmonic $\Delta \proj{2k}{f_+}=0$. Also, the fact that a product of spinor fields is again a spinor field will make it more convenient to work with $\monogenics^+(M)$ instead of the whole of $\monogenics(M)$. Not only is it more convenient, but to prove the \cref{thm:gelfand}, we only need $\monogenics^+(M)$. 

A key piece of the proof for \cref{thm:gelfand} will be the ability to uniformly approximate monogenic spinor fields on open subsets $U$ by fields defined on $M$. We will cite \cite[theorem 11.7]{calderbank_geometrical_1995} from Calderbank's thesis.
\begin{theorem}[Calderbank]
\label{thm:calderbank}
Let $U$ be an open subset of $M$. Then any monogenic fields on $U$ may be approximated (locally uniformly in all derivatives) by restrictions of monogenic fields on $M$.
\end{theorem}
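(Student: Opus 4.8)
\Cref{thm:calderbank} is a Runge-type approximation theorem for the elliptic operator $\grad$, and the plan is to run the Lax--Malgrange duality argument in the Clifford-analytic setting. First I would topologize $\monogenics(U)=\ker(\grad)$ as a closed subspace of $\smoothfields{U}{\G}$ with its Fréchet topology of locally uniform convergence of all derivatives; this is legitimate because $\grad$ is continuous and, since $\grad^2=\Delta$ is elliptic, every monogenic field is automatically smooth (indeed each grade component is harmonic, hence real-analytic). The restriction map $\rho\colon\monogenics(M)\to\monogenics(U)$ is continuous, and the claim is that $\rho(\monogenics(M))$ is dense. By the Hahn--Banach theorem it suffices to show that every continuous functional $\Lambda$ on $\smoothfields{U}{\G}$ annihilating $\rho(\monogenics(M))$ already annihilates $\monogenics(U)$.

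Such a $\Lambda$ is represented by a compactly supported $\G$-valued distribution $T$ with $\operatorname{supp}T\subset U$, and the hypothesis becomes $\langle T,g\rangle=0$ for all $g\in\monogenics(M)$. Embed $M$ in a slightly larger open manifold $\widetilde M$ (for $M$ a compact region in $\R^n$ simply take $\widetilde M=\R^n$). The central object is the Cauchy kernel of Clifford analysis, namely a (two-sided) fundamental solution $E$ of $\grad$ — the higher-dimensional analogue of $\tfrac{1}{2\pi i z}$ in the plane; on a curved $M$ one instead uses a two-sided parametrix for $\grad$ extracted from the Hodge-theoretic inverse of $\Delta$. Form the Cauchy transform $u(x)\coloneqq\langle T,E(\,\cdot\,,x)\rangle$, a field that is monogenic on $\widetilde M\setminus\operatorname{supp}T$ and satisfies $\grad u=T$. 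For $x\in\widetilde M\setminus M$ the germ $E(\,\cdot\,,x)$ is monogenic on a neighborhood of $M$, so it restricts to an element of $\monogenics(M)$ and the hypothesis forces $u(x)=0$; strong unique continuation for $\grad$ — inherited from Aronszajn's theorem for $\Delta$ — then propagates $u\equiv 0$ throughout the connected component $W$ of $\widetilde M\setminus\operatorname{supp}T$ that contains $\widetilde M\setminus M$. Choosing a domain $\Omega$ with $\operatorname{supp}T\subset\Omega$, $\overline\Omega\subset U$, and $\partial\Omega\subset W$, a Green--Stokes (Cauchy--Pompeiu) identity for $\grad$ expresses $\langle T,f\rangle=\langle\grad u,f\rangle$ as a boundary integral over $\partial\Omega$ that is bilinear in $u|_{\partial\Omega}$ and $f|_{\partial\Omega}$; since $u$ vanishes on $\partial\Omega$ this integral is zero, so $\langle T,f\rangle=0$ for every $f\in\monogenics(U)$ and density follows.

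The main obstacle is globalizing from $\R^n$ to an arbitrary compact Riemannian manifold with boundary. For $M$ a compact region in $\R^n$ — the case actually needed in \cref{thm:gelfand_intro} — the kernel $E$ is explicit and the argument above is essentially the classical Runge theorem for the Dirac operator. On a curved $M$ with boundary one must instead build a genuine two-sided parametrix for $\grad$ from the Hodge--Morrey--Friedrichs decomposition \cite{schwarz_hodge_1995}, verify that $\grad$ has closed range on the relevant Sobolev scales so that the step ``$T$ annihilates $\ker\grad$ $\Rightarrow$ $T\in\overline{\operatorname{ran}\grad}$'' is justified, and absorb the finite-dimensional spaces of harmonic fields carried by $\grad$. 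A further point that ought to be recorded as a hypothesis is the standard Runge topological condition ensuring that the component $W$ above actually surrounds $\operatorname{supp}T$ (equivalently, that $\partial\Omega\subset W$ is achievable): no connected component of $M\setminus\overline U$ may be compactly contained in $\operatorname{int}M$ — otherwise, already in the plane, $1/z$ on an annulus cannot be approximated by functions monogenic on the disk. In the applications of interest $U$ is a coordinate ball, for which the condition holds automatically.
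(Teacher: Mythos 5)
The paper never proves this statement: it is imported verbatim from Calderbank's thesis (his Theorem 11.7) and used as a black box, so there is no internal argument to measure yours against. Taken on its own terms, your sketch is the classical Lax--Malgrange duality proof of Runge-type approximation for an elliptic operator, specialized to $\grad$: Hahn--Banach, representation of an annihilating functional by a compactly supported distribution, the Cauchy transform against the fundamental solution, unique continuation from the exterior, and a Cauchy--Pompeiu integration by parts over a hypersurface surrounding $\operatorname{supp}T$. For $M$ a compact region of $\R^n$ this is the right strategy and the outline is sound, modulo the left/right-monogenic bookkeeping you only allude to (the transform $u$ is monogenic on the opposite side from $f$, and the scalar-valued functional must be rewritten through the multivector inner product before it becomes a $\G$-valued distribution); you are also candid that the curved case (two-sided parametrix, closed range, harmonic fields) is left as a technical debt rather than proved.

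The most substantive part of your write-up is the final caveat, and it is not optional fine print: with no hypothesis on $U$, the statement as transcribed cannot be literally correct. Concretely, take $M=\overline{\mathbb{D}}\subset\R^2$ and let $U$ be an open annulus encircling an interior point $\blade{x}_0\notin U$, and set $f=\blade{G}(\cdot-\blade{x}_0)\blade{e}_1\in\monogenics^+(U)$. For every $g\in\monogenics(M)$, Cauchy's theorem gives $\int_S \normal\, g\, \mu_\partial=0$ over a circle $S\subset U$ around $\blade{x}_0$, while the same integral for $f$ equals $\blade{e}_1\neq 0$ (shrink $S$ to $\blade{x}_0$ and use \cref{eq:fundamental_solution}); since $S$ is a compact subset of $U$, locally uniform approximation by restrictions from $M$ is impossible. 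So the Runge-type condition you record (every component of $M\setminus U$ must meet $\partial M$; your version with $M\setminus\overline{U}$ should be stated for $M\setminus U$) has to be part of the hypotheses, or one must retreat to a germ/neighborhood formulation --- presumably Calderbank's own statement carries such a proviso that was dropped in transcription here. The caveat also matters for how the theorem is used downstream: \cref{cor:power_series_for_regions} implicitly needs the compact region $M$ to have no cavities inside $\ball$ (it fails for an annular $M$, by the same example), whereas in the separation lemma the tube $\mathbb{T}_\gamma$ is contractible and the condition holds automatically, as you note for coordinate balls. With the hypothesis added, your proposal is a correct blueprint for the flat case; without it, no argument can close, because the statement itself fails.
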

This will be used repeatedly. Other supplemental information can be found in \cite{booss-bavnbek_dirac_1993} as well. 

From Ryan's \cite[theorem 4]{ryan_clifford_2004}, we have the fact that for $\ball$ a ball in Euclidean space, there exists a power series representation. In fact, one can think of this as an explicit realization of the approximation given by Calderbank which uses the embedding $\ball \subset \R^n \subset S^n$ where $S^n$ is the $n$-sphere. We will construct the power series in \cref{subsec:power_series}. The coefficients, which we define in \cref{eq:coefficients}, will require us to compute integrals of multivector fields which we define next.

In order to integrate, we build differential forms from $k$-vector fields by attaching a measure. Take the coordinate measures (dual basis in the cotangent space) $dx^i$ in local coordinates and multiply by the corresponding reciprocal vector to get \emph{basic directed measures} $d\blade{x}^i = \blade{e}^i dx^i$ (no summation implied), which determine the \emph{$k$-dimensional directed measures}
\begin{equation}
    dX_k \coloneqq \frac{1}{k!} \sum_{i_1 < \cdots < i_k} d\blade{x}^{i_1}\wedge \cdots \wedge d\blade{x}^{i_k} = \frac{1}{k!} \sum_{\mathcal{I}} \blade{E}^{\mathcal{I}^\dagger} dx^{\mathcal{I}}
\end{equation}
An arbitrary differential $k$-form $\alpha_k$ is given by taking a corresponding $k$-vector $A_k$ and contracting along the $k$-dimensional directed measure
\begin{equation}
\alpha_k = A_k \contract dX_k^\dagger.
\end{equation}
Specifically, $A_k = \sum_{\mathcal{I}} \alpha^{\mathcal{I}} \blade{E}_{\mathcal{I}}$ is called the \emph{multivector equivalent of $\alpha_k$}. This is a realization of the isomorphism between $\smoothfields{M}{\G}$ and $\Omega(M)$ as $C^\infty(M)$-modules and it can be viewed as an extension of the musical isomorphisms between vectors and 1-forms \cite[chapter 13]{lee_introduction_2012}. The multivector equivalent of the Riemannian volume form $\mu$ is $\pseudoscalar^{-1^\dagger}$ and $\blade{I}(x)$ represents the tangent space $T_x M$. Of course, in the case where $g$ is definite, the equivalent to $\mu$ is simply $\pseudoscalar$. On $\partial M$, we have the \emph{boundary pseuodoscalar} $\blade{I}_\partial$ and dual to this the boundary normal vector field $\normal = \pseudoscalar_\partial^\perp$. As on $M$, the boundary pseudoscalar $\blade{I}_\partial$ is the multivector equivalent of the boundary area form $\mu_\partial$. From this point forward, we work solely with multivector fields and contract with directed measures to integrate. We now realize the action of $\grad \wedge$ and $\grad \contract$ as the equivalents of $d$ and $\delta$ by
\begin{align}
d \alpha_k &= (\grad \wedge A_k) \contract dX_{k+1}^\dagger, & \delta \alpha_k &= (\grad \contract A_k)\contract dX_{k-1}^\dagger.
\end{align}

One beautiful result in Clifford analysis is the generalization of the Cauchy integral formula. Details for various cases are in our standard sources \cite{doran_geometric_2003,hestenes_clifford_1984, calderbank_geometrical_1995, booss-bavnbek_dirac_1993}. For $\R^n$ with $n\geq 2$, we define the vector-valued function
\begin{equation}
\label{eq:greens_function}
\blade{G}(\blade{x})\coloneqq \frac{1}{\omega_n} \frac{\blade{x}}{|\blade{x}|^n}
\end{equation}
where $\omega_n$ is the area of the unit sphere in $\R^n$ and the use of the bold $\blade{x}$ indicates that we treat this point in space as a vector. For this work, the bold variable $\blade{x}$ will be a clear way to distinguish when we are embedded in $\R^n$, so it is important to keep track of this. The function $\blade{G}$ is the Green's function of the Dirac operator since
\begin{equation}
\label{eq:fundamental_solution}
\grad \blade{G}(\blade{x}) = \delta_{\blade{x}},
\end{equation}
where $\delta_{\blade{x}}$ is the Dirac mass located at $\blade{x}\in \R^n$. There is also a solution in the case $n=1$, but it assumes a different form and we do not need it here.

Define the \emph{Cauchy kernel} $\blade{G}(\blade{x}'-\blade{x})\normal(\blade{x}')$ by translation and multiplication by the outward normal $\normal$ on the boundary $\partial M$. For compact $M\subset \R^n$ and monogenic field $f\in \monogenics(M)$, we convolve the boundary value of $f$ with the Cauchy kernel to arrive at the \emph{Cauchy integral formula}
\begin{equation}
\label{eq:cauchy_integral}
f(\blade{x}) = (-1)^{n} \int_\boundary \blade{G}(\blade{x}'-\blade{x}) \blade{\nu}(\blade{x}') f(\blade{x}') \mu_\partial(\blade{x}').
\end{equation}
Hence, we have a method for uniquely determining a monogenic field $f$ from the boundary values $f\vert_\boundary$. A more general notion of the Cauchy integral exists for arbitrary manifolds with boundary (see \cite{calderbank_geometrical_1995}), but it seems this also depends on a choice of embedding into a closed manifold. In the case of our Green's function $\blade{G}$, this corresponds to the embedding $M\subset \R^n \subset S^n$ \cite[proposition 9.10]{calderbank_geometrical_1995}.

\subsection{Currents}
\label{sec:fields_and_currents}

For the remainder, let $M$ be a smooth oriented compact Riemannian manifold of dimension $n$ with boundary $\partial M$ and positive definite metric tensor $g$. Let $\blade{I}$ be the unit pseudoscalar on $M$ which is the multivector equivalent of the Riemannian volume form $\mu$, $\pseudoscalar_\partial$ the boundary pseuodoscalar which is the multivector equivalent of the Riemannian boundary area form $\mu_\partial$ and dual to $\blade{\nu}$ the boundary normal field. The space $\contfields{M}{\G}$ comes with the \emph{uniform norm}
\begin{equation}
\|f\| \coloneqq \sup_{x\in M} |f(x)|.
\end{equation}
Recall that since $g$ is positive definite, at any point $x\in M$ we have that $|f(x)|^2=(f(x),f(x))$ is nothing but the Euclidean vector norm on $\R^{2n}$, so $\|f\|$ really is a norm on $\contfields{M}{\G}$. Furthermore:

\begin{proposition}
If $M$ is a compact Riemannian manifold, then the space $\contfields{M}{\G}$ is a (real) $C^\ast$-algebra with involution $\dagger$.
\end{proposition}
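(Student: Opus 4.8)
The plan is to verify the $C^\ast$-algebra axioms in order: first the Banach algebra structure, then the involution, then the crucial $C^\ast$-identity $\|f^\dagger f\| = \|f\|^2$. Pointwise, $\G = \G_n$ is a finite-dimensional real associative algebra under the geometric product, so $\contfields{M}{\G}$ is a real associative algebra under pointwise multiplication, with pointwise addition and scalar multiplication; the unit is the constant field $x \mapsto 1$. Completeness under $\|\cdot\|$ is the standard fact that a uniform limit of continuous $\G$-valued functions is continuous, using that $\G_n \cong \R^{2^n}$ is complete. So the only analytic content at this stage is submultiplicativity of the norm.

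First I would establish that the pointwise multivector norm is submultiplicative on $\G_n$, i.e.\ $|AB| \le C|A||B|$ for some constant; in fact one can check $|AB| \le |A||B|$ directly, since in an orthonormal basis $\blade{E}_\mathcal{I}\blade{E}_\mathcal{J} = \pm \blade{E}_{\mathcal{I}\triangle\mathcal{J}}$ are unit and orthogonal, so writing $A = \sum_\mathcal{I} a_\mathcal{I}\blade{E}_\mathcal{I}$, $B = \sum_\mathcal{J} b_\mathcal{J}\blade{E}_\mathcal{J}$ and expanding, the Cauchy--Schwarz inequality gives the bound (this is exactly where positive-definiteness of $g$, hence that $|\cdot|$ is a genuine norm making the $\blade{E}_\mathcal{I}$ orthonormal, is used). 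Taking suprema over $x \in M$ then yields $\|fg\| \le \|f\|\|g\|$, so $\contfields{M}{\G}$ is a real Banach algebra.

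Next I would check that $\dagger$, applied pointwise, is a well-defined isometric involution: it is $\R$-linear, satisfies $(fg)^\dagger = g^\dagger f^\dagger$ and $f^{\dagger\dagger} = f$ by the corresponding pointwise identities for the reverse operator recalled in the preliminaries, it sends continuous fields to continuous fields (it is a fixed linear map on the fibre), and it is an isometry because $|A^\dagger| = |A|$ for every multivector $A$ (the reverse permutes the orthonormal basis blades up to sign), hence $\|f^\dagger\| = \|f\|$.

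The main obstacle, and the only place where something genuinely Clifford-algebraic must be argued, is the $C^\ast$-identity. It suffices to prove the pointwise statement $|A^\dagger A| = |A|^2$ for all $A \in \G_n$, since then $\|f^\dagger f\| = \sup_x |f(x)^\dagger f(x)| = \sup_x |f(x)|^2 = \|f\|^2$. Unlike the complex or matrix case this is not automatic, because $A^\dagger A$ need not be a scalar; one must show its full multivector norm collapses to $|A|^2 = (A,A) = \proj{}{A^\dagger A}$. I would argue this by using that $\dagger$ is the adjoint of the geometric product with respect to the inner product $(\,\cdot\,,\,\cdot\,)$ (the cyclicity computation displayed in the preliminaries), so that for any unit blade $\blade{E}_\mathcal{K}$ one has $(A^\dagger A,\blade{E}_\mathcal{K}) = (A, A\,\blade{E}_\mathcal{K}^{\dagger\,\dagger}\cdots)$-type manipulations reducing to inner products against a rotated orthonormal system; combined with the Cauchy--Schwarz bound $|A^\dagger A| \le |A^\dagger||A| = |A|^2$ and the reverse bound coming from $\proj{}{A^\dagger A} = |A|^2 \le |A^\dagger A|$, the two inequalities pinch to equality. (Concretely, $|A^\dagger A|^2 \le |A^\dagger|^2|A|^2 = |A|^4$ while $|A^\dagger A|^2 \ge \proj{}{A^\dagger A}^2 = |A|^4$, forcing $|A^\dagger A| = |A|^2$.) With the pointwise identity in hand the proposition follows immediately; I should remark that this is a \emph{real} $C^\ast$-algebra, so no conjugate-linearity of the involution is claimed.
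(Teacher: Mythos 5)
Your skeleton is the same as the paper's: reduce everything to a pointwise statement about the fibre $\G_n$ (adjointness of $\dagger$, Cauchy--Schwarz, and the identity $|A^\dagger A|=|A|^2$), then take suprema. The trouble is that the two genuinely Clifford-algebraic steps you invoke both fail. The inequality $|AB|\le|A||B|$ is not true for the multivector norm: although each product $\blade{E}_\mathcal{I}\blade{E}_\mathcal{J}=\pm\blade{E}_{\mathcal{I}\triangle\mathcal{J}}$ is a unit blade, distinct pairs $(\mathcal{I},\mathcal{J})$ with the same symmetric difference give the \emph{same} blade, so the terms in your expansion are not pairwise orthogonal and can add constructively; and this is not a Cauchy--Schwarz statement anyway, since Cauchy--Schwarz controls $(A,B)=\proj{}{A^\dagger B}$, the scalar part of a product, not the norm $|AB|$ of the whole product. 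Concretely, in $\G_2$ take $A=1+\blade{e}_1$. Then $A^\dagger=A$ and $A^\dagger A=A^2=2+2\blade{e}_1$, so $|A^\dagger A|=2\sqrt{2}$ while $|A|^2=2$. This single example violates $|AB|\le|A||B|$ (with $B=A$) and shows that the upper half of your ``pinch'' is false; your lower bound $|A^\dagger A|\ge\proj{}{A^\dagger A}=|A|^2$ is correct, but the claimed equality $|A^\dagger A|=|A|^2$ simply does not hold, so the $C^\ast$-identity cannot be obtained this way.

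The same example, spread over a coordinate patch by a bump function, gives a continuous field with $\|f^\dagger f\|=2\sqrt{2}\neq 2=\|f\|^2$, so the obstruction is not a gap in your bookkeeping but a feature of the Frobenius-type multivector norm: on a noncommutative fibre it is neither submultiplicative with constant $1$ nor a $C^\ast$-norm (the natural $C^\ast$-norm on $\G_n$ is the operator norm of left multiplication, not $\sqrt{\proj{}{A^\dagger A}}$). For comparison, the paper's own proof asserts $|f^\dagger f|=|f|^2$ through the same orthonormal-blade expansion, implicitly treating the products $\blade{E}_\mathcal{I}^\dagger\blade{E}_\mathcal{J}$ as pairwise orthogonal, so it stumbles on exactly the same collision phenomenon; your proposal reproduces that step rather than repairing it. What does survive of your write-up is the purely Banach-space part (completeness from compactness and the finite-dimensional fibre, continuity and isometry of $\dagger$, submultiplicativity up to a dimension-dependent constant, hence a Banach $\ast$-algebra after renorming); the honest $C^\ast$ statement needs either the fibrewise operator norm or a weaker conclusion.
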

\begin{proof}
Note that $\G$ is a real $2^n$ dimensional Banach space with the multivector inner product. Since $M$ is a compact Hausdorff space, it follows that the space $\contfields{M}{\G}$ is a Banach space (see \cite{saab_integral_1991}). Taking $f,g \in \contfields{M}{\G}$, at each point
\begin{align}
(fg,fg)  = (gg^\dagger, f^\dagger f),
\end{align}
since $\dagger$ is the adjoint. Using the Cauchy--Schwarz inequality
\begin{align}
|fg|^2 = (fg,fg) \leq (f^\dagger f, f^\dagger f) (gg^\dagger,gg^\dagger) = |f|^2 |g|^2.
\end{align}
The last equality follows from taking an orthonormal basis $\blade{e}_i$ at any $\G_xM$ and forming the orthonormal vector basis blades (versors) $\blade{E}_\mathcal{I}$ and putting $f=\sum_{\mathcal{I}} f^\mathcal{I} \blade{E}_\mathcal{I}$. Then we have
\begin{align}
f^\dagger f = \sum_{\mathcal{I}}\sum_{\mathcal{J}} f^\mathcal{I} f^\mathcal{J} \blade{E}_\mathcal{I}^\dagger \blade{E}_\mathcal{J}
\end{align}
from which we see that
\begin{align}
(f^\dagger f,f^\dagger f)=|f^\dagger f|^2 = \sum_{\mathcal{I}}\sum_{\mathcal{J}}\left(f^\mathcal{I} f^\mathcal{J}\right)^2
\end{align}
and finally
\begin{align}
\left(|f|^2\right)^2 = \left( \sum_{\mathcal{I}} \left(f^\mathcal{I}\right)^2\right)^2 = \sum_{\mathcal{I}}\sum_\mathcal{J} \left(f^\mathcal{I}\right)^2 \left(f^\mathcal{J}\right)^2
\end{align}
which implies that $|f^\dagger f|=|f|^2$. Taking suprema, it follows that $\|fg\|\leq \|f\|\|g\|$ which shows $\contfields{M}{\G}$ is a real Banach algebra.

For $f,g\in \contfields{M}{\G}$ and $\lambda \in \R$ we have that
\begin{equation}
(f+g)^\dagger=f^\dagger + g^\dagger, \quad (\lambda f)^\dagger = \lambda^\dagger f^\dagger = \lambda f^\dagger, \quad f^{\dagger \dagger}=f, \quad (fg)^\dagger = g^\dagger f^\dagger,
\end{equation}
by definition and at each point
\begin{align}
    |f^\dagger f|=|f|^2
\end{align}
as shown before. By taking suprema, $\|f^\dagger f\| = \|f\|^2$ which shows $\contfields{M}{\G}$ is a real $C^\ast$-algebra.
\end{proof}

We topologize the $C^\ast$-algebra $\contfields{M}{\G}$ with the uniform norm topology. Dually, we construct $\G$ valued functionals on this space which we call currents \emph{\`a la} de Rham.

\begin{definition}
The space of \emph{$\G$-currents} is
\begin{equation}
\contcurrents{\G}{\G} \coloneqq \{T \colon \contfields{M}{\G} \to \G ~\vert~ T \textrm{ is continuous}\}
\end{equation}
Given a subalgebra $\algebra{}\subset \G$, we have the \emph{$\mathcal{A}$-currents} 
\begin{equation}
\contcurrents{\mathcal{A}}{\G} = \{T \colon \contfields{M}{\G} \to \G ~\vert~ T \textrm{ is continuous}\}.
\end{equation}
\end{definition}
The $\G$-currents are given the \emph{weak-$\ast$ topology}, i.e., the coarsest topology on $\contcurrents{\G}{\G}$ where point evaluation of fields is continuous. Specifically, for $x \in M$, the Dirac mass $\G$-current $\delta_x \in \contcurrents{\G}{\G}$ defined by $\delta_x[f]=f(x)$ for $f\in \contfields{M}{\G}$ is continuous. The $\mathcal{A}$-currents inherit the subspace topology.

Since the target $\G$ of the $\G$-currents is itself a $C^\ast$-algebra and a $\G$-module, we expect some currents to respect these algebraic structures. For example, $\contfields{M}{\G^+}$ is a $\G^+$-bimodule. Given a $\algebra{} \subset \G^+$, $\G^+$ and $\contfields{M}{\G^+}$ are both $\algebra{}$-modules and Banach algebras.

\begin{definition}
Let $\algebra{}\subset \G$ be a subalgebra and let $T \in \contcurrents{\G}{\G}$ be a $\G$-current. We say that $T$ is \emph{right $\algebra{}$-linear} if it is a right $\algebra{}$-module homomorphism
\begin{equation}
    T[f\alpha + g] = T[f]\alpha + T[g]
\end{equation}
for $\alpha \in \algebra{}$ and $f,g\in \contfields{M}{\G}$. Furthermore, we say that $T$ is \emph{multiplicative on $\algebra{}$} if it is an $\R$-algebra homomorphism
\begin{equation}
    T[pq] = T[p]T[q]
\end{equation}
for $p,q\in \contfields{M}{\algebra{}}$. Finally, a current $T$ is \emph{grade preserving} if for $h\in C^0(M;\G^k)$ we have $T[h]\in \G^k$.
\end{definition}

The set of grade preserving linear multiplicative currents are the most useful for us. It is worth remarking that currents as defined here provide an ample setting for further study. There are plenty of tweaks that could be interesting. One such example would be the subset of the de Rham currents (dual to the $C^\infty$-smooth forms) given by the $\R$-currents $\contcurrents{\R}{\G}$. Here, we will make choices that allow us to generalize the classical Gelfand result.

\subsection{Subsurface fields}

The algebra $\contfields{M}{\G}$ is not commutative in general and the space $\monogenics(M)$ is not an algebra. This poses a direct issue for a straightforward generalization of the Gelfand representation and Belishev's 2-dimensional boundary control method \cite{belishev_calderon_2003}, but it does not thwart the effort completely since Belishev and Vakulenko manage to build a 3-dimensional version \cite{belishev_algebras_2020}. Extending this approach, we will use insight on axial fields from those two authors but make the change to think not of an axis, but of a plane. Of course, in $\R^3$ a plane and axis are dual, but when we extend beyond dimension-3, we will be required to use planes. If $S$ is dimension 2, then $\monogenics^{+}(S)$ is a copy of the commutative algebra of holomorphic functions. Intuitively, we can build commutative Banach algebras of monogenic fields for surfaces in $M$.

Let $U\subset M$ be a geodesically convex region, i.e., that all points $x\in U$ are connected with unique shortest paths. Let $\bivector(x)$ be a unit 2-blade in $\G_xU$ for some $x\in U$. Since $U$ is convex, there exists a shortest geodesic between all points in $U$ which allows us to parallel transport $\bivector(x)$ to build a unit 2-blade field $\bivector \in \smoothfields{U}{\G^2}$. Then, at all points in $U$, we have a projection  $\projection_{\bivector}$ onto $\bivector(y)$ in each geometric tangent space $\G_yU$.

\begin{definition}
Let $U$ and $\bivector$ be as before, then a continuous spinor field $f \in \contfields{U}{\G^+}$ satisfying
    \begin{equation}
    f_+ = \projection_{\bivector} \circ f_+
    \end{equation}
is a \emph{subsurface spinor field on $U$}.
\end{definition}

The definition for a subsurface spinor field on $U$ requires that $f_+ = \projection_{\bivector} \circ f_+$ which means that we can put $f_+=f_0+f_2 \bivector$ where $f_0,f_2 \in \contfields{U}{\R}$.

\begin{definition}
Let $U$ and $\bivector$ be as before, then \emph{the space of monogenic subsurface spinors on $U$} is
\begin{equation}
\algebra{\bivector}(U) = \{f_+ \in \contfields{U}{\G^+} ~|~ f_+ = \projection_{\bivector} \circ f_+,~ \grad f_+ = 0\}.
\end{equation}
The \emph{collection of all monogenic subsurface spinors on $U$} is
\begin{equation}
\algebra{}(U) = \{f_+ \in \algebra{\bivector}(U) ~\vert~ \textrm{$\bivector$ parallel transported from $\bivector(x)\in \G_xU$, ~ $\forall \bivector(x)\in \G_xU$} \}.
\end{equation}
\end{definition}

\begin{proposition}
Let $U$ and $\bivector$ be as before, then the space $\algebra{\bivector}(U)$ is a commutative Banach algebra.
\end{proposition}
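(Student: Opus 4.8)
The plan is to check the algebra axioms one at a time, viewing $\algebra{\bivector}(U)$ as a subspace of the Banach space $\contfields{U}{\G^+}$ of continuous spinor fields with the uniform norm, and to isolate the one genuinely non-formal point: that monogenicity survives pointwise multiplication. The vector-space structure is immediate, since both defining conditions $f_+ = \projection_{\bivector}\circ f_+$ and $\grad f_+ = 0$ are $\R$-linear.

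For closure under products, write two elements as $f_+ = f_0 + f_2\bivector$ and $h_+ = h_0 + h_2\bivector$ with $f_0,f_2,h_0,h_2\in\contfields{U}{\R}$, which is legitimate by the definition of a subsurface field. Since $\bivector$ is a unit $2$-blade, $\bivector^2 = -1$ pointwise, so
\begin{equation}
f_+ h_+ = (f_0 h_0 - f_2 h_2) + (f_0 h_2 + f_2 h_0)\bivector,
\end{equation}
which is again a plane-spinor field, i.e.\ $f_+h_+ = \projection_{\bivector}\circ(f_+h_+)$. This also displays $\planespinors$ as a pointwise copy of $\C$, so pointwise multiplication of subsurface fields is commutative; hence commutativity of $\algebra{\bivector}(U)$ will be automatic once multiplicative closure is established, and the unit constant field lies in $\algebra{\bivector}(U)$.

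The substantive step is $\grad(f_+h_+) = 0$. Because $\bivector$ is produced by parallel transport along geodesics out of a single point, near any $x\in U$ one can choose coordinates adapted to the $\bivector$-plane in which, using \cref{eq:grad_of_planespinor} and the grade splitting $\blade{v}\bivector = \blade{v}\contract\bivector + \blade{v}\wedge\bivector$, the equation $\grad f_+ = 0$ reduces to the Cauchy--Riemann system for $(f_0,f_2)$ in the local variable $z = x^1 + x^2\bivector$, together with the condition that $f_0,f_2$ be independent of the transverse directions; equivalently, $\algebra{\bivector}(U)$ is isomorphic as a normed $\R$-algebra to (a subalgebra of) the holomorphic functions on the leaves of the $\bivector$-foliation. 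On this model the classical Leibniz rule $\bar\partial(fg) = (\bar\partial f)g + f\,\bar\partial g$ shows a product of solutions is a solution, so $f_+h_+\in\algebra{\bivector}(U)$. I expect this reduction to the leafwise Cauchy--Riemann picture to be the main obstacle: the Hodge--Dirac operator is not a derivation for the left geometric product, so $\grad(f_+h_+)\neq(\grad f_+)h_+ + f_+\grad h_+$ in general, and it is precisely the plane-spinor constraint together with the flatness of $\bivector$ along geodesics that restores the product rule.

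For completeness, $\algebra{\bivector}(U)$ is a closed subspace of $\contfields{U}{\G^+}$: the constraint $f_+ = \projection_{\bivector}\circ f_+$ is a pointwise linear condition stable under uniform limits, and uniform limits of monogenic fields are monogenic — this follows from the Cauchy integral representation \cref{eq:cauchy_integral} (yielding locally uniform control of all derivatives) or from \cref{thm:calderbank} with the ellipticity of $\grad$. Submultiplicativity $\|f_+h_+\|\le\|f_+\|\,\|h_+\|$ is inherited pointwise from $|AB|\le|A||B|$ in $\G$, exactly as in the $C^\ast$-algebra proposition above. Combined with the vector-space structure and multiplicative closure, this shows $\algebra{\bivector}(U)$ is a commutative Banach algebra.
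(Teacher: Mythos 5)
Your routine steps (the pointwise complex multiplication formula, commutativity, the unit, submultiplicativity, and closedness under uniform limits) are fine, and you are in fact more explicit than the paper about completeness. The problem is the step you yourself flag as the main one: you never actually prove $\grad(f_+h_+)=0$. Your argument asserts that near each point one can choose coordinates adapted to $\bivector$ in which $\grad f_+=0$ becomes the planar Cauchy--Riemann system for $(f_0,f_2)$ together with independence from the transverse directions, so that $\algebra{\bivector}(U)$ is identified with leafwise holomorphic functions. That characterization is a flat-space computation: it is correct when $\bivector$ is a constant coordinate $2$-blade in $\R^n$, but the proposition is stated for a geodesically convex $U$ in an arbitrary compact Riemannian $M$, where $\bivector$ is only obtained by parallel transport along geodesics emanating from a single point. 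Such a field is in general not covariantly constant, its plane distribution need not be integrable (so there are no ``leaves of the $\bivector$-foliation'' to speak of), and no adapted coordinates exist in which $\grad$ splits as you describe; terms involving $\nabla_{\blade{e}_i}\bivector$ enter $\grad f_+=0$ and destroy the claimed reduction. So closure under multiplication --- the only non-formal point --- is left unestablished in exactly the generality required, and your own remark that this ``reduction is the main obstacle'' concedes as much without closing it.

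The paper settles this step by the route you explicitly set aside. You are right that the naive two-sided rule $\grad(fg)=(\grad f)g+f(\grad g)$ fails, but $\grad$ does obey a Leibniz rule in overdot form, $\grad(fg)=\dot{\grad}\dot{f}g+\dot{\grad}f\dot{g}$; the first term is $(\grad f)g=0$, and pointwise commutativity of the plane-spinor algebra $\planespinors$ lets the second be rewritten as $(\grad g)f=0$, so $fg$ is monogenic with no coordinate reduction, no foliation, and no appeal to transverse independence. (Strictly, moving $f$ past the differentiated factor uses that derivatives of $\planespinors$-valued fields stay $\planespinors$-valued, which is automatic in the Euclidean setting the paper later works in; but even so this argument needs far less than yours.) To repair your proposal you would either have to restrict to the flat case and carry out the transverse-independence computation honestly, or supply the missing argument that your leafwise reduction survives a merely parallel-transported, non-constant $\bivector$ --- which is precisely what the sketch does not do; otherwise, replace that step by the Leibniz-plus-commutativity argument above.
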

\begin{proof}
Note that multiplication of two fields $f=f_0+f_2 \bivector$ and $g=g_0+g_2 \bivector$ (dropping the subscripted $+$ on $f$ and $g$ momentarily for clarity) in $\algebra{\bivector}(U)$ is commutative and given pointwise by the familiar complex multiplication
\begin{equation}
\label{eq:axial_multiplication}
fg = f_0 g_0 - f_2 g_2 + \bivector (f_0 g_2 + f_2 g_0) = gf.
\end{equation}
Using the overdot notation to say which field we are taking derivatives of, we find commutivity gives us algebraic closure since
\begin{align}
\grad(fg) &= \grad fg+\dot{\grad}f \dot{g} &&\textrm{by the Leibniz rule}\\
&=0 + \grad g f && \textrm{since $f$ is monogenic and $fg=gf$}\\
&=0 && \textrm{since $g$ is monogenic.}
\end{align}
Since $\algebra{\bivector}(U)$ is a subalgebra of $\contfields{U}{\G}$, it is a commutative Banach algebra.
\end{proof}

This construction provides a notion of complex functions that are nested in multivector fields on any manifold of dimension $n\geq 2$. In the case $n=1$, no such fields exist and it is exactly in the 2-dimensional Euclidean case that the complex-valued functions are just the spinor fields themselves and the unit 2-blade field is the tangent pseudoscalar to the surface. The special case of monogenic subsurface spinor fields serve as a realization of complex holomorphic functions inside the more general spinor fields. If we take $\bivector =\blade{e}_1\blade{e}_2$, then we have the Cauchy--Riemann equations from $\grad f_+ = 0$ via \cref{eq:cauchy_riemann_equations}.

For example, take the case where $M$ is a compact region of $\R^n$ with the Euclidean metric. Then $M$ itself is compactly contained inside of some ball $\ball$ which is convex. The set of bivectors is parameterized by $\bivector \in \Grassmannian{2}{n}$ (i.e., the possible coordinate planes) and for each such $\bivector$ we can consider $\algebra{\bivector}(M)$ as a restriction of $\algebra{\bivector}(\ball)$ via \cref{thm:calderbank}. Each unit 2-blade decomposes into two orthogonal unit vectors. Let $\bivector = \blade{v}\blade{w}$ where $\blade{v}$ and $\blade{w}$ are a pair of orthogonal unit vectors and consider the monogenic subsurface field $z\colon M \to \planespinors \subset \G^+$ defined by
\begin{equation}
\label{eq:z}
z(\blade{x}) \coloneqq  \projection_{\bivector}(\blade{v} \blade{x}).
\end{equation}
It is immediately clear that $z = \projection_{\bivector} \circ z$ and in applying the Hodge--Dirac operator
\begin{align}
\grad z &= \grad (\blade{x}\cdot \blade{v}) + \grad (\blade{x}\cdot \blade{w})\bivector\\
    &= 0.
\end{align}
We can define such a function $z$ for any choice of $\bivector$ and construct new functions from polynomials in these variables. The notation $z$ should serve as a reminder of the connection to complex analysis and one may consider $\blade{v}$ as the real axis and $\blade{w}$ as the imaginary axis. The behavior of fields on an arbitrary convex $U$ inside an arbitrary compact $M$ is identical.

\section{A Clifford-Algebraic Gelfand Theorem}

\subsection{Spinor Spectrum}

Through Belishev's generalization of Gelfand's classical result, surfaces are determined up to conformal equivalence by the spectrum (or maximal ideal space) of $\monogenics^+(S)$. The naive generalization would be to seek this out in $\monogenics^+(M)$, but, again, this space is not an algebra! Maximal ideals can also be identified with multiplicative functionals and this allowed Belishev and Vakulenko to achieve their 3-dimensional result. We follow suit with multiplicative linear currents.

\begin{definition}
    The \emph{spinor spectrum} $\characters(M) \subset \contcurrents{\G^+}{\G^+}$ is the set of nonzero grade preserving right linear currents that are multiplicative over the collection of all subsurface spinor algebras $\algebra{}(U)$,
    \begin{multline}
        \characters(M) \coloneqq \{ \delta \neq 0 \in \contcurrents{\G^+}{\G^+} ~\vert~ \textrm{$\delta$ grade preserving,}\\
\delta(fg+h\alpha) = \delta(f)\delta(g)+\delta(h)\alpha, ~ \forall f,g,h \in \algebra{}(U), ~\alpha \in \G^+\},
    \end{multline}
    and we refer to the elements as \emph{spin characters}.
\end{definition}

One choice of spin character is point evaluation: if $\delta$ is defined on $f_+\in \monogenics^+(M)$ by $\delta(f_+)=f_+(x^\delta)$ for some $x^\delta \in M$, then it follows that $\delta \in \characters(M)$. This shows that $M$ injects into $\characters(M)$ by the map $x \mapsto \delta_x$ where $\delta_x[f]=f(x)$. We will find that (at least for embedded $M$) characters defined by point evaluation are the only elements of $\characters(M)$. This shows the inclusion is surjective. In fact, the main result is that this map is a homeomorphism.

\begin{theorem}
\label{thm:gelfand}
Let $M$ be a compact region in $\R^n$. For any $\delta \in \characters(M)$, there is a point $\blade{x}_\delta \in M$ such that $\delta(f) = f(\blade{x}_\delta)$ for any $f_+\in \monogenics^+(M)$ a monogenic field. Given the weak-$\ast$ topology on the space of $\G$-currents, the map
\[
\Gamma \colon \characters(M) \to M, \quad \delta \mapsto \blade{x}_\delta
\]
is a homeomorphism. The Gelfand transform $\monogenics^+(M) \to \contfields{\characters(M)}{\G^+}$ given by $\widehat{f_+}(\delta) = \delta[f_+]$ is an isometric isomorphism onto its image so that $\monogenics^+(M)\cong \widehat{\monogenics^+(M)}$.
\end{theorem}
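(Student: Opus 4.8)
The plan is to establish the theorem in three stages, mirroring the three assertions in the statement: (1) every spin character is point evaluation at some $\blade{x}_\delta \in M$; (2) the induced map $\Gamma$ is a homeomorphism; (3) the Gelfand transform is an isometric isomorphism onto its image. The backbone of stage (1) is the power series representation promised in \cref{subsec:power_series}: near any point, a monogenic spinor $f_+$ can be written as a locally uniformly convergent series in the variables $z$ of \cref{eq:z}, one such variable for each coordinate plane $\bivector \in \Grassmannian{2}{n}$. Fixing a point $\blade{x}_0 \in M$ and a bivector $\bivector$, the restriction of $z$ to the subsurface algebra $\algebra{\bivector}(U)$ is an honest complex variable, so $\delta$ restricted to $\algebra{\bivector}(U)$ is a nonzero multiplicative $\planespinors$-valued functional on a commutative Banach algebra isomorphic to the holomorphic functions on a planar domain; by the classical Gelfand theory for the disk algebra, $\delta$ must be evaluation at some point $\zeta_\delta(\bivector)$ in that planar domain — equivalently $\delta(z_{\bivector}) = z_{\bivector}(\blade{x}_\delta(\bivector))$ for a point $\blade{x}_\delta(\bivector)$ on the corresponding geodesic surface. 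The key consistency argument is then that these per-plane evaluation points all coincide at a single $\blade{x}_\delta \in M$: this should follow because the $z$-variables for different planes through a common point are algebraically linked (they share the vector coordinate functions $\blade{x}\cdot\blade{e}_i$), right $\G^+$-linearity lets $\delta$ see each scalar coordinate function $\proj{}{z_{\bivector}}$, and two points of $M$ agreeing on all coordinate functions are equal. Finally, applying $\delta$ termwise to the power series — legitimate because $\delta$ is continuous and the series converges in the uniform norm after using \cref{thm:calderbank} to realize $f_+$ as a restriction of a field on a ball, together with multiplicativity and right-linearity on the subsurface algebras where the monomials live — gives $\delta(f_+) = f_+(\blade{x}_\delta)$ for all monogenic $f_+$.

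For stage (2), injectivity of $\Gamma$ is immediate since $\delta_{\blade{x}} = \delta_{\blade{y}}$ forces $\blade{x} = \blade{y}$ by evaluating on the coordinate functions, and surjectivity is the already-noted observation that every point evaluation is a spin character. Continuity of $\Gamma$: in the weak-$\ast$ topology a net $\delta_\alpha \to \delta$ means $\delta_\alpha[f] \to \delta[f]$ for every continuous field $f$; applying this to the coordinate functions $\blade{x}\cdot\blade{e}_i$ (which are components of monogenic fields restricted from a ball, hence elements on which the characters act by evaluation) shows $\blade{x}_{\delta_\alpha} \to \blade{x}_\delta$ in $\R^n$, hence in $M$. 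For continuity of $\Gamma^{-1}$, i.e. $\blade{x}\mapsto \delta_{\blade{x}}$, one needs $f(\blade{x}_\alpha)\to f(\blade{x})$ for every $f \in \contfields{M}{\G^+}$ whenever $\blade{x}_\alpha \to \blade{x}$, which is just joint continuity of evaluation and holds because each such $f$ is continuous; so $\Gamma^{-1}$ is continuous. Since $M$ is compact and $\characters(M)$ is Hausdorff (being a subspace of a weak-$\ast$ dual), a continuous bijection with continuous inverse — or even just the continuous bijection from the compact space, once we know $\characters(M)$ is Hausdorff and the map is closed — is a homeomorphism. A small technical point worth addressing is that $\characters(M)$ with the subspace weak-$\ast$ topology is Hausdorff and that $\Gamma$ is genuinely continuous as a map into $M$ (not merely into $\R^n$), which follows since $M$ carries the subspace topology.

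For stage (3), the Gelfand transform $\widehat{(\cdot)}\colon \monogenics^+(M) \to \contfields{\characters(M)}{\G^+}$, $\widehat{f_+}(\delta) = \delta[f_+]$, is $\R$-linear (and right $\G^+$-linear) because each $\delta$ is; it is well-defined into continuous functions because $\widehat{f_+} = f_+ \circ \Gamma$ by stage (1) and $\Gamma$ is continuous with $f_+$ continuous. It is isometric because, transporting along the homeomorphism $\Gamma$, $\|\widehat{f_+}\|_{\sup} = \sup_{\delta}|\delta[f_+]| = \sup_{\blade{x}\in M}|f_+(\blade{x})| = \|f_+\|$. Isometries are injective, so $\widehat{(\cdot)}$ is an isometric isomorphism onto its image $\widehat{\monogenics^+(M)}$, giving $\monogenics^+(M) \cong \widehat{\monogenics^+(M)}$.

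I expect the main obstacle to be stage (1), and within it the consistency step: extracting from the collection of per-plane complex evaluation points a single honest point $\blade{x}_\delta \in M$ at which \emph{all} monogenic fields are evaluated. The disk-algebra Gelfand argument on each $\algebra{\bivector}(U)$ is classical, but one must (a) handle the fact that a priori $\delta$'s restriction to $\algebra{\bivector}(U)$ could fail to land in $\planespinors$ — this is where the grade-preserving hypothesis and the requirement that $\delta$ be multiplicative into the $\C$-like algebra $\planespinors$ are essential; (b) argue the evaluation points for overlapping convex regions $U$ patch together to a global point of $M$, using uniqueness of analytic/monogenic continuation (\cref{thm:calderbank}); and (c) upgrade from "$\delta$ agrees with evaluation at $\blade{x}_\delta$ on each subsurface monomial $z_{\bivector}$ and on coordinate functions" to "$\delta$ agrees with evaluation on all of $\monogenics^+(M)$", which is exactly where termwise application to the power series must be justified — the convergence is uniform on compact subsets, so one should fix a slightly larger ball, use \cref{thm:calderbank} to get uniform control, and invoke continuity of $\delta$ in the uniform norm.
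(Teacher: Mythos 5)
Your stages (2) and (3) match the paper's proof, but stage (1) has two genuine gaps. First, you get the per-plane evaluation point by asserting that a multiplicative functional on $\algebra{\bivector}(U)$ "must be evaluation at some point in that planar domain by the classical Gelfand theory for the disk algebra." Classical Gelfand theory only gives a point of the \emph{maximal ideal space} of that commutative Banach algebra; identifying that space with the underlying planar region is precisely the nontrivial content (it is Belishev's 2-dimensional theorem, and for a general compact region the analogous statement needs proof -- indeed when the fibers of the projection of $M$ onto the plane of $\bivector$ are disconnected, $\algebra{\bivector}(M)$ is not even an algebra of functions on the planar shadow, so its spectrum need not be that shadow). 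The paper never needs this: it only uses grade preservation to get $\delta[z_{ij}]\in\planespinors[\bivector_{ij}]$ (\cref{prop:surface_spinors_map_to_plane_spinors}), and then the cross-plane identities $z_{ij}\bivector_{ji}=-z_{ji}$ and $z_{ij}=z_{\ell j}+z_{i\ell}\bivector_{\ell j}$, together with right $\G^+$-linearity applied to the \emph{constant} bivectors, to force $\delta[z_{ij}]=z_{ij}(\blade{x}_\delta)$ for a single $\blade{x}_\delta\in\R^n$ (\cref{lem:correspondence}). Your suggested mechanism -- that right $\G^+$-linearity "lets $\delta$ see each scalar coordinate function $\proj{}{z_{\bivector}}$" -- does not work as stated: grade projection is not right multiplication by an element of $\G^+$, and $x^j$ by itself is not monogenic, so nothing forces $\delta[x^j]=x^j(\blade{x}_\delta)$. (The same issue infects your continuity argument for $\Gamma$ in stage (2); it is fixable by testing weak-$\ast$ convergence against the monogenic fields $z_{ij}$ rather than the coordinate functions.)

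Second, and more seriously, even if each per-plane restriction of $\delta$ were evaluation at a point of the corresponding shadow, your consistency step only produces a point of $\R^n$ each of whose coordinate-plane projections lies in the corresponding shadow of $M$; for nonconvex $M$ (already for a thickened sphere in $\R^3$) such a point need not lie in $M$. The paper handles membership in $M$ by a separate argument that your proposal omits entirely (\cref{lem:identification}): take the monogenic fields $\blade{G}(\blade{x}-\blade{x}_n)\blade{e}_1$ built from the Green's function \cref{eq:greens_function} with singularities $\blade{x}_n\notin M$ converging to $\blade{x}_\delta$, and use continuity of $\delta$ in the uniform norm to derive a contradiction if $\blade{x}_\delta\notin M$. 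Without this (or some substitute), the first assertion of \cref{thm:gelfand} -- that $\blade{x}_\delta\in M$ -- is not established. The remainder of your argument (termwise application of $\delta$ to the power series via \cref{lem:density}, \cref{cor:power_series_for_regions}, \cref{thm:calderbank}, the bijection and homeomorphism in the weak-$\ast$ topology, and the sup-norm computation for the isometry) agrees with the paper.
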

We prove \cref{thm:gelfand} in the following steps:
\begin{enumerate}[i.]
\item Utilize a power series representation for elements in a ball $\ball$ which shows that the monogenic polynomials $\monogenics^\mathcal{P}(\ball)$ are dense in $\monogenics^+(M)$.
\item Build the elements of this series from homogeneous polynomials in variables of the form $z$ (i.e., \cref{eq:z}). Using the fact that the spin characters are multiplicative over the collection $\algebra{}(M)$, continuous, and $\G^+$-linear we show that it suffices to determine the action $\delta[z]$ for $\delta \in \characters(M)$.
\item Determine that the action $\delta[z]$ is point evaluation at some point $\blade{x}_\delta \in \R^n$ by looking at the algebraic relationships between the variables $z$ and combining this with the multiplicativity of $\delta$.
\item Construct a carefully selected sequence of monogenic fields on $M$ and use continuity of $\delta$ to show that $\blade{x}_\delta \in M$.
\end{enumerate}
The correspondence between $\delta \in \characters(M)$ is then clear and the homeomorphism follows by choice of the weak-$\ast$ topology. The fact that the Gelfand transform is an isometry follows directly from the fact that each character corresponds to point evaluation.

\subsection{Power series}
\label{subsec:power_series}

Take the standard orthonormal basis fields $\blade{e}_1,\dots,\blade{e}_n$ for $\R^n$ and define the functions $z_{ij} = x^j - x^i \bivector_{ij}$ where $x^i$ are the coordinate functions corresponding to our basis vector fields and where $\bivector_{ij}\coloneqq \blade{e}_i \blade{e}_j$. This takes \cref{eq:z} and multiplies by $\bivector_{ij}$ to match Ryan \cite{ryan_clifford_2004} and, in effect, this is just replacing $z$ with $\mathbf{i}z$. Note that Ryan's use of $\blade{e}_i^{-1}$ become unnecessary due to our choice of a positive definite quadratic form. Identifying $\bivector_{ij}$ with its plane, note that for any compact region $M\subset \R^n$ each $z_{ij} \in \algebra{\bivector_{ij}}(M)$. Fix a natural number $k\geq 0$ and natural numbers $k_j$ to form the tuple $\vec{k}=(k_2,\dots,k_n)$ such that $k_2+\cdots +k_n=k$ with $k_j\geq 0$. This is often called a multi-index with absolute value $|\vec{k}|=k$. The set of all multi-indices of absolute value $k$ is of size ${n-2 + k \choose n-2}$. For example, we can write down a degree-$k$ polynomial in terms of the monomial variables $z_{ij}$ based on a multi-index $\vec{k}$ by
\begin{equation}
\label{eq:ex_hom_poly}
z_{12}^{k_2}(\blade{x}) z_{13}^{k_3}(\blade{x}) \cdots z_{1n}^{k_n}(\blade{x}).
\end{equation}
But, ordering does matter. To build the \emph{homogeneous monogenic degree $k$ polynomials} we sum over permutations $\sigma$ which rearrange the order in which we write the monomials but keep the total powers of each monomial the same throughout
\begin{equation}
\label{eq:homogeneous_monogenic_polynomials}
        p_{\vec{k}}(\blade{x}) = \frac{1}{k!} \sum_{\sigma}z_{1 \sigma(1)}(\blade{x}) \cdots z_{1 \sigma(k)}(\blade{x}),
\end{equation}
where $\sigma(j) \in \{2,\dots,n\}$ permutes the monomials without rearrangement. To reiterate, monomials that appear in the summand of \cref{eq:homogeneous_monogenic_polynomials} with the powers given by $\vec{k}$ are just reordered from what we see in \cref{eq:ex_hom_poly} based on $\sigma$ which is why we do not see $\vec{k}$ explicitly appear on the right hand side of \cref{eq:homogeneous_monogenic_polynomials}. Note that this is necessary since the monomials do not commute with one another. Ryan \cite[Proposition 1]{ryan_clifford_2004} shows each of these polynomials is monogenic and linearly independent. We remark that the polynomials $p_{\vec{k}}$ are homogeneous in the elements $z_{ij} \in \algebra{\blade{B}_{ij}}(M)$. 

As examples, take $n=3$ and $k=2$ with $k_2=2$ and $k_3=0$ so that the multi-index is $\vec{k}=(2,0)$. Then in coordinates $\blade{x} = (x^1,x^2,x^3)$
\begin{align}
p_{(2,0)}(\blade{x}) &= \frac{1}{2!} \sum_{\sigma} z_{1\sigma(1)} z_{1\sigma(2)}\\
		&= \frac{1}{2!} z_{12}(\blade{x}) z_{12}(\blade{x})\\
        &=\frac{1}{2!} (x^2-x^1 \blade{e}_{1}\blade{e}_2)^2.
\end{align}
We can see that given our choice of $\vec{k}$, there is only one choice of $\sigma$, i.e., the $\sigma$ such that $\sigma(1)=2$ and $\sigma(2)=2$. On the other hand if we take the multi-index $\vec{k}=(1,1)$, then
\begin{align}
p_{(1,1)}(x_1,x_2,x_3) &= \frac{1}{2!} \sum_{\sigma} z_{1\sigma(1)} z_{\sigma(2)}\\
		&= \frac{1}{2!}\left( z_{12}(\blade{x}) z_{13}(\blade{x}) + z_{13}(\blade{x}) z_{12}(\blade{x}) \right)\\
        &= \frac{1}{2!}\left((x^2-x^1 \blade{e}_{1}\blade{e}_2)(x^3-x^1 \blade{e}_{1}\blade{e}_3)+(x^3-x^1 \blade{e}_1\blade{e}_3)(x^2-x^1 \blade{e}_{1}\blade{e}_2)\right).
\end{align}
Again, our choice of $\vec{k}$ allowed for two choices of $\sigma$ that were not repetitive: first $\sigma(j)=j$ and the other $\sigma(1)=2$ and $\sigma(2)=1$. Furthermore, working out the details of $\grad p_{(k_2,k_3)}$ shows the necessity of summing over permutations in order to ensure that each is monogenic. The collection of all such polynomials for all multi-indices is the set of \emph{monogenic polynomials}
\begin{equation}
    \monogenics^\mathcal{P}(M) = \left.\left\{\sum_{k=0}^N \left(\sum_{\substack{{\vec{k}} \\ {|\vec{k}| = k}}}p_{\vec{k}}(\blade{x}) a_{\vec{k}}\right) ~\right\vert~ N\in \mathbb{N}, ~ a_{\vec{k}} \in \G_n\right\}
\end{equation}

The use of multi-index notation is also to facilitate taking higher order partial derivatives by defining
\begin{equation}
    \nabla^{\vec{k}} \coloneqq \frac{\partial^k}{\partial x_2^{k_2} \partial x_3^{k_3} \cdots \partial x_n^{k_n}}.
\end{equation}
In the case of a smooth manifold, the partial derivatives can be replaced with their corresponding covariant derivatives if the need should arise. Next, \cref{lem:density}, \cref{cor:power_series_for_regions}, and \cref{prop:local_power_series} show that for arbitrary $M$, $\monogenics(M)$ are locally uniformly approximated by monogenic polynomials.

\begin{lemma}
\label{lem:density}
Let $\ball$ be a compact ball in $\R^n$, then the space $\monogenics^\mathcal{P}(\ball)$ is dense in $\monogenics(\ball)$.
\end{lemma}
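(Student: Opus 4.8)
The plan is to show that any monogenic spinor field on a ball $\ball \subset \R^n$ admits a locally uniformly convergent expansion in the homogeneous monogenic polynomials $p_{\vec{k}}$ of \cref{eq:homogeneous_monogenic_polynomials}, hence is a uniform limit of elements of $\monogenics^\mathcal{P}(\ball)$ on the compact ball. The cleanest route is to invoke Ryan's power series theorem directly: by \cite[theorem 4]{ryan_clifford_2004}, for $f \in \monogenics(\ball)$ (using the embedding $\ball \subset \R^n \subset S^n$ alluded to after \cref{eq:cauchy_integral}) there is an expansion $f(\blade{x}) = \sum_{k=0}^\infty \sum_{|\vec{k}|=k} p_{\vec{k}}(\blade{x})\, a_{\vec{k}}$ with $a_{\vec{k}} \in \G_n$, converging locally uniformly (in fact in all derivatives) on the open ball. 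First I would recall that the $p_{\vec{k}}$ are monogenic and linearly independent by Ryan \cite[Proposition 1]{ryan_clifford_2004}, so the partial sums $s_N = \sum_{k=0}^N \sum_{|\vec{k}|=k} p_{\vec{k}} a_{\vec{k}}$ lie in $\monogenics^\mathcal{P}(\ball)$ by definition.

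The only genuine subtlety is that Ryan's convergence is \emph{locally} uniform on the \emph{open} ball, whereas $\monogenics(\ball)$ as defined here consists of monogenic fields on the \emph{closed} compact ball $\ball$ with the uniform norm on $\ball$. To bridge this gap I would argue as follows. Given $f \in \monogenics(\ball)$ and $\varepsilon > 0$, first use \cref{thm:calderbank} (Calderbank): $f$, viewed as monogenic on a neighborhood of $\ball$, is approximated locally uniformly by restrictions of monogenic fields on a slightly larger ball $\ball'$; equivalently, shrink and rescale, so that it suffices to approximate $f$ that is monogenic on an open ball $\ball_{1+\eta}$ strictly containing $\ball$. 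On $\overline{\ball}$, which is a compact subset of $\ball_{1+\eta}$, Ryan's series converges uniformly, so there is $N$ with $\sup_{\blade{x} \in \ball} |f(\blade{x}) - s_N(\blade{x})| < \varepsilon$. Since $s_N \in \monogenics^\mathcal{P}(\ball)$, this shows $f$ is in the uniform-norm closure of $\monogenics^\mathcal{P}(\ball)$ inside $\monogenics(\ball)$, which is exactly density.

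I expect the main obstacle to be precisely this passage from the interior convergence statement to a statement valid up to the boundary of the compact ball — i.e., making sure that "monogenic field on the compact ball $\ball$" is handled correctly, since the power series is only guaranteed to converge on the open ball of analyticity. The Calderbank approximation theorem is the right tool, because it lets one replace $f$ by a field monogenic on an honestly larger open set, after which $\overline{\ball}$ sits compactly inside the domain of convergence and uniform convergence on $\overline{\ball}$ is automatic. A secondary, purely bookkeeping point is to confirm that the particular normalization and indexing of $p_{\vec{k}}$ used in this paper (summing over permutations with the $1/k!$ factor, indices $z_{1j}$) matches Ryan's basis so that his expansion theorem applies verbatim; this is routine given the discussion following \cref{eq:homogeneous_monogenic_polynomials}.
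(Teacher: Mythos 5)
Your proposal rests on exactly the same pillar as the paper's proof: Ryan's expansion theorem \cite[theorem 4]{ryan_clifford_2004} applied to $f\in\monogenics(\ball)$, with partial sums of the series $\sum_k \sum_{|\vec{k}|=k} p_{\vec{k}} a_{\vec{k}}$ lying in $\monogenics^\mathcal{P}(\ball)$. The paper's proof is in fact shorter than yours: it just writes down the coefficients explicitly as the boundary integrals $a_{\vec{k}} = \int_{\partial\ball}\nabla^{\vec{k}}\blade{G}\,\normal f\,\mu_\partial$ (\cref{eq:coefficients}) and records that the series converges uniformly on $\interior\ball$, without addressing convergence up to $\partial\ball$ at all; the boundary issue you worry about is simply absorbed later, since in \cref{cor:power_series_for_regions} the ball is chosen strictly larger than $M$, so locally uniform convergence on $\interior\ball$ already gives uniform approximation on $M$. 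One caution about your bridging step: \cref{thm:calderbank} as stated only yields \emph{locally uniform} approximation on an open subset $U$, so taking $U=\interior\ball$ inside a larger ball $\ball'$ does not by itself give uniform approximation on the closed ball $\ball$. The alternative you mention in passing is the one that actually works: since $\ball$ may be taken centered at the origin and $f$ is smooth up to $\partial\ball$, the dilates $f_\eta(\blade{x})=f((1-\eta)\blade{x})$ are monogenic on a strictly larger ball and converge to $f$ uniformly on $\ball$, after which Ryan's series for $f_\eta$ converges uniformly on the compact $\ball$. So your argument is correct once the weight is put on the rescaling rather than on Calderbank, and it proves a slightly stronger (closed-ball) form of density than the paper's own proof establishes.
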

\begin{proof}
Without loss of generality, suppose $\ball$ is centered at the origin. Then let $f\in \monogenics(\ball)$ and define the coefficients $a_{\vec{k}}\in \G_n$ by
\begin{equation}
\label{eq:coefficients}
a_{\vec{k}} = \int_{\partial \ball} \nabla^{\vec{k}} \boldsymbol{G} \normal f \mu_{\partial},
\end{equation}
where $\boldsymbol{G}$ is the Green's function for the Hodge--Dirac operator in $\R^n$. By \cite[theorem 4]{ryan_clifford_2004}, we have
\begin{equation}
        f(\blade{x}) = \sum_{k=0}^\infty \left(\sum_{\substack{{\vec{k}} \\ {|\vec{k}| = k}}} p_{\vec{k}} (\blade{x}) a_{\vec{k}} \right),
\end{equation}
which converges uniformly to $f$ for points $\blade{x}\in \interior \ball$.
\end{proof}

But, as stated previously, we have \cref{thm:calderbank} which tells us that for open subsets in $\ball$ we can uniformly approximate monogenic fields on those subsets. We apply this fact to get the following corollary.

\begin{corollary}
\label{cor:power_series_for_regions}
Let $M\subset \R^n$ be a compact region. Then there exists $\ball$ such that $\monogenics(\ball)$ are dense in $\monogenics(M)$.
\end{corollary}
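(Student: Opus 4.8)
The plan is to combine the density of monogenic polynomials in the ball (Lemma \ref{lem:density}) with Calderbank's uniform approximation theorem (Theorem \ref{thm:calderbank}), chaining the two to move from $\monogenics(\ball)$ down to $\monogenics(M)$. First, since $M\subset\R^n$ is compact, it is bounded, so there is a closed ball $\ball$ with $M\subset\interior\ball$; fix such a $\ball$. Now take any $f\in\monogenics(M)$ and any $\varepsilon>0$. The goal is to produce $g\in\monogenics(\ball)$ with $\|f-g\|_{C^0(M)}<\varepsilon$ (and, if one wants the stronger statement, with the approximation holding locally uniformly in all derivatives on $\interior M$).

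The key step is to apply Theorem \ref{thm:calderbank} with the open set $U=\interior M$ (an open subset of $\ball$, viewing $\ball$ as the ambient manifold): any monogenic field on $U$ is the locally uniform limit, in all derivatives, of restrictions of monogenic fields on $\ball$. Applied to $f|_{\interior M}\in\monogenics(\interior M)$, this yields a sequence $g_j\in\monogenics(\ball)$ with $g_j|_{\interior M}\to f|_{\interior M}$ locally uniformly. To upgrade local uniform convergence on $\interior M$ to uniform convergence on the compact set $M$ (which includes $\partial M$), I would invoke the Cauchy integral formula \cref{eq:cauchy_integral}: shrink slightly to a compact $M'$ with $M\subset\interior M'\subset M'\subset\interior\ball$ — here one uses that $M$ is compactly contained in $\interior\ball$, so such a collar exists — apply Calderbank on $\interior M'$ to get local uniform convergence on the compact set $M\subset\interior M'$, which is then genuine uniform convergence on $M$. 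Thus $\monogenics(\ball)$, restricted to $M$, is dense in $\monogenics(M)$ in the uniform norm, which is the assertion.

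The main obstacle I anticipate is the passage from \emph{locally} uniform convergence (what Calderbank's theorem literally gives on the open set $U$) to \emph{uniform} convergence on the closed region $M$, since $\partial M$ lies on the boundary of $U=\interior M$ where local uniform control degenerates. The clean fix is the collar/enlargement argument just sketched: because $M$ sits strictly inside $\ball$, one has room to enlarge $M$ to $M'$, run Calderbank on $\interior M'$, and then $M$ is a compact subset of the \emph{open} set $\interior M'$, so local uniform convergence there is uniform convergence on $M$. A secondary point to be careful about is that "compact region" should be read so that this enlargement is possible (e.g. $M$ equals the closure of its interior, or at least $M\subset\interior\ball$ with $M$ compact, which is automatic); I would state this mild hypothesis explicitly if needed. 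Combined with Lemma \ref{lem:density}, this also gives the transitive consequence that $\monogenics^{\mathcal P}(\ball)$ is dense in $\monogenics(M)$, which is what step (i) of the proof of Theorem \ref{thm:gelfand} requires.
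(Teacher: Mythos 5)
Your overall strategy---enclose $M$ in a ball $\ball$, invoke \cref{thm:calderbank} with $\ball$ as the ambient manifold, and chain with \cref{lem:density}---is exactly the paper's proof, which is only a few lines long: choose $\ball$ containing $\overline{M}$, quote Calderbank to approximate fields on $M$ uniformly by fields in $\monogenics(\ball)$, and cite \cref{lem:density}. You also correctly flag the point the paper passes over silently: as quoted, \cref{thm:calderbank} is stated for \emph{open} subsets $U$ and yields only \emph{locally} uniform approximation, whereas the corollary needs uniform approximation on the compact set $M$, including $\partial M$.

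However, the collar repair you propose does not close that gap. To apply \cref{thm:calderbank} with $U=\interior M'$, where $M\subset\interior M'$, the field being approximated must be monogenic on all of $\interior M'$; but $f$ is only given on $M$, and an element of $\monogenics(M)$ (smooth up to $\partial M$) need not extend monogenically across $\partial M$---just as a holomorphic function on the closed disk, smooth up to the boundary, need not continue past the circle. The Cauchy integral formula \cref{eq:cauchy_integral} does not supply such an extension: the integral over $\partial M$ defines a monogenic field on $\R^n\setminus\partial M$ that reproduces $f$ inside $M$ but does not extend $f$ outside (in the model case it vanishes there). So your enlargement runs in the wrong direction---Calderbank can only be applied on open sets where $f$ already lives, and with $U=\interior M$ you get exactly the locally uniform statement you started with. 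To finish one must either read Calderbank's Theorem 11.7 in a form that gives uniform approximation on compact sets (the reading the paper implicitly adopts when it asserts uniform approximation on $M$ directly), or add a genuine extension or shrinking argument (e.g.\ dilation for star-shaped $M$), neither of which the collar as described provides.
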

\begin{proof}
Since $M$ is compact in $\R^n$ there exists a ball $\ball$ such that the closure of $M$ is contained in $\ball$. Then by \cref{thm:calderbank}, any monogenic fields on $M$ can be uniformly approximated by monogenic fields in $\monogenics(\ball)$, and we have our result by \cref{lem:density}, 
\end{proof}
\begin{proposition}
\label{prop:local_power_series}
Let $M$ be an $n$-dimensional compact Riemannian manifold and let $f\in \monogenics(M)$. Then $f$ admits a local power series representation over finitely many open subsets.
\end{proposition}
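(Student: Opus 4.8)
The plan is to reduce the statement about an arbitrary compact Riemannian manifold $M$ to the Euclidean case already handled in \cref{lem:density} and \cref{cor:power_series_for_regions}, using local charts together with Calderbank's approximation theorem (\cref{thm:calderbank}). First I would cover $M$ by finitely many coordinate balls: since $M$ is compact, choose a finite atlas $\{(U_\alpha, \varphi_\alpha)\}_{\alpha=1}^N$ with each $\varphi_\alpha \colon U_\alpha \to V_\alpha \subset \R^n$ a diffeomorphism onto an open set $V_\alpha$, and shrink so that each $\overline{\varphi_\alpha(U_\alpha')} \subset V_\alpha$ lies inside a Euclidean ball $\ball_\alpha$, where $U_\alpha' \Subset U_\alpha$ still cover $M$. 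The key point is that monogenicity is a local condition: $f \in \monogenics(M)$ restricts to a monogenic field $f|_{U_\alpha}$ on each chart with respect to the pulled-back metric.

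The main conceptual step is transferring the power series from $\R^n$ through the chart. On $U_\alpha$ the metric $g$ is not the Euclidean metric, so $\monogenics(U_\alpha)$ is defined via the Hodge--Dirac operator $\grad$ for $g$, not the flat one; one cannot directly quote \cref{lem:density}. The way around this is to invoke \cref{thm:calderbank} with $U = U_\alpha'$ and the ambient manifold taken to be $M$ itself — no, rather one wants the reverse direction. More carefully: I would instead use that Calderbank's theorem is stated for open subsets of a fixed manifold, so I apply it \emph{within} $M$ to approximate $f|_{U_\alpha'}$ by (restrictions of) monogenic fields, but this does not yet give polynomials. The cleanest route is: since the statement only claims a \emph{local} power series representation over finitely many open sets, it suffices to produce, for each $\alpha$, an open $W_\alpha \subset U_\alpha'$ (with the $W_\alpha$ still covering $M$) on which $f$ equals a convergent series of monogenic polynomials in the sense of \cref{subsec:power_series}, where the ``polynomials'' and ``convergence'' are interpreted in the chart coordinates $x^i = \varphi_\alpha^i$. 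Concretely, one pushes $f|_{U_\alpha}$ to a field on $V_\alpha \subset \R^n$; but monogenicity is not preserved under a general diffeomorphism, so this push-forward is only monogenic for the flat structure when $g$ is flat in that chart.

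Given that subtlety, the honest plan is the following. For each point $p \in M$, pick a geodesically convex neighborhood $U_p$ (as in the discussion preceding \cref{eq:z}); by compactness extract a finite subcover $U_1, \dots, U_N$. On such a convex $U_i$, one has the subsurface variables $z$ of \cref{eq:z} available (built by parallel transport), and \cref{thm:calderbank} applied to the inclusion $U_i' \Subset U_i$ — now regarding $U_i$ as sitting inside the closed manifold obtained by an embedding, per the remark after \cref{eq:cauchy_integral} and \cite[proposition 9.10]{calderbank_geometrical_1995} — lets us approximate $f|_{U_i'}$ locally uniformly in all derivatives by restrictions of monogenic fields on a model Euclidean ball, to which \cref{lem:density} (equivalently \cref{cor:power_series_for_regions}) applies to give the polynomial series. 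I would then argue that the resulting series, transported back, converges locally uniformly on a slightly smaller open set $W_i \Subset U_i'$, and that $\{W_i\}$ is the desired finite cover. The main obstacle is precisely this transfer: making rigorous that Calderbank's approximation plus the embedding into a closed manifold (and thence into a sphere/Euclidean ball as in \cref{eq:greens_function}) yields a genuine monogenic-polynomial expansion in local coordinates, rather than merely an abstract approximation — i.e., pinning down in what coordinates and with respect to which Dirac operator the polynomials $p_{\vec{k}}$ are being written, and confirming the series inherits local uniform convergence. Everything else (compactness to get finiteness, the elementary fact that monogenicity is local) is routine.
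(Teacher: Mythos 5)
Your skeleton is the same as the paper's: cover the compact $M$ by finitely many convex coordinate charts $(U,\varphi)$ with $\varphi(U)$ contained in a Euclidean ball, transport $f$ into the ball, and invoke \cref{lem:density} together with \cref{cor:power_series_for_regions} to get a convergent series of monogenic polynomials, with compactness supplying the finite cover. Where you diverge is in how the transfer into $\R^n$ is justified. The paper does not use the machinery you propose (Calderbank's approximation inside $M$, the embedding into a closed manifold, and then into a sphere/ball); it handles the step in one line, simply asserting that the pushed-forward field lies in $\monogenics(\varphi(U))$ and then quoting the Euclidean lemmas. So the ``main obstacle'' you isolate --- that a chart is not an isometry, hence a field monogenic for the Hodge--Dirac operator of $g$ need not be monogenic for the flat Dirac operator in the chart coordinates, so one must say with respect to which operator and coordinates the polynomials $p_{\vec{k}}$ are written --- is precisely the point the paper passes over without argument, not a point where you have missed an idea that the paper supplies. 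Your proposal is therefore honest but incomplete: as written it does not close that transfer step, and the extra route through Calderbank's theorem and the closed-manifold embedding is more elaborate than, and different from, what the paper actually does. If you want a proof at the paper's level of rigor, your first paragraph already suffices (restrict to a convex chart, push forward, apply \cref{lem:density}, take a finite subcover); if you want the stronger statement you are implicitly after, you would need an additional argument (e.g.\ normal coordinates plus a perturbation/regularity argument, or a precise statement of how $\grad$ transforms under the chart) that neither you nor the paper provides.
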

\begin{proof}
Take $f\in \monogenics(M)$, let $(U,\varphi)$ a local coordinate chart such that $U\subset M$ is an open convex region and $\varphi(U) \subset \ball \subset \R^n$ where $\ball$ is some closed ball in $\R^n$. Then $f \circ \varphi \in \monogenics(\varphi(U))$ and by \cref{lem:density} and \cref{cor:power_series_for_regions}, $f\circ \varphi$ admits a power series representation. Since $M$ is Riemannian there exists a finite covering of $M$ by convex regions and this gives us a local power series representation over finitely many open sets.
\end{proof}

Following the details of the above proofs for a surface $S$ yields the fact that holomorphic functions on a surface admit local power series representations. In this case, take $x^1, x^2$ as local isothermal coordinates and define $z=x^2 - x^1\bivector$. Then for $f_+\in \monogenics^+(S)$, we have the local power series $f_+(z)=\sum_{k=0}^\infty z^k a_k $ where $a_k \in \planespinors$. 

\begin{remark}
It is important to note that if $f_+\in \monogenics^+(M)$, the local power series has coefficients $a_{\vec{k}} \in \G^+$ which you can see by \cref{eq:coefficients}.
\end{remark}

\subsection{Characters}

For $M$ a compact region embedded in $\R^n$ and $f_+\in \monogenics^+(M)$, we can see that for $\delta \in \characters(M)$ 
\begin{align}
\delta\left[f\right] &= \sum_{k=0}^\infty \left( \sum_{\vec{k}} \delta[p_{\vec{k}}] a_{\vec{k}} \right)
\end{align}
by continuity and right $\G^+$-linearity of $\delta$ since $a_{\vec{k}}\in \G^+$. On each monogenic polynomial,
\begin{align}
\delta(p_{\vec{k}}) &= \frac{1}{k!} \sum_{\sigma}\delta\left[z_{1\sigma(1)} \right] \cdots \delta\left[z_{1\sigma(k)}\right],
\end{align}
by multiplicativity over $\algebra{}(M)$. Hence, the action of $\delta$ is completely determined by the action on each $z_{ij}$.

\begin{proposition}
\label{prop:surface_spinors_map_to_plane_spinors}
Let $M$ be a compact manifold embedded in $\R^n$ and $\bivector$ a unit 2-blade field that is a parallel translation of a coordinate plane. Then for any $\delta \in \characters(M)$ we have $\delta(\algebra{\bivector}(M)) = \planespinors$.
\end{proposition}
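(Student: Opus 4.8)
The plan is to show the two containments $\delta(\algebra{\bivector}(M)) \subseteq \planespinors$ and $\planespinors \subseteq \delta(\algebra{\bivector}(M))$ separately, using the fact that $\algebra{\bivector}(M)$ is a commutative Banach algebra isomorphic (via the subsurface structure $f_+ = f_0 + f_2 \bivector$) to an algebra of $\planespinors$-valued functions, together with the grade-preserving, multiplicative, and right $\G^+$-linear properties of $\delta$. First I would observe that any $f_+ \in \algebra{\bivector}(M)$ decomposes as $f_+ = f_0 + f_2\bivector$ with $f_0, f_2 \in \contfields{M}{\R}$; since $\delta$ is grade preserving, $\delta[f_0] \in \G^0 = \R$ and $\delta[f_2\bivector] \in \G^2$. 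The scalar part is already in $\planespinors$, so the real work is to pin down where the grade-2 part lands: a priori $\delta[f_2\bivector]$ is an arbitrary bivector, and I must show it is a real multiple of $\bivector$ itself.

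For this, the key step is to exploit multiplicativity on $\algebra{\bivector}(M)$ applied to the constant field $\bivector$ (or, more robustly, to the variable $z$ of \cref{eq:z}, which lies in $\algebra{\bivector}(M)$ and is monogenic). Since $\bivector^2 = -1$ as a constant field in $\contfields{M}{\planespinors} \subset \contfields{M}{\algebra{\bivector}}$, multiplicativity forces $\delta[\bivector]^2 = \delta[\bivector \cdot \bivector] = \delta[-1] = -\delta[1]$. Grade-preservation gives $\delta[1] \in \R$, and a short argument (using that $\delta \neq 0$ together with linearity/multiplicativity, so $\delta[1]$ acts as a unit on the image) shows $\delta[1] = 1$. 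Writing $\delta[\bivector] = B$, a grade-2 element, the relation $B^2 = -1$ in the geometric algebra $\G^+$ forces $B$ to be a unit 2-blade, i.e. $B$ represents a $2$-plane and $\R \oplus \Span(B) \cong \C$. Now for any $f_+ = f_0 + f_2\bivector \in \algebra{\bivector}(M)$ one has $\delta[f_+] = \delta[f_0] + \delta[f_2\bivector]$; using right $\G^+$-linearity one would like to factor $\delta[f_2\bivector]$, but $f_2$ is not scalar-constant, so instead I would approximate $f_+$ by polynomials in $z$ (using that $\algebra{\bivector}(M)$, being a copy of the holomorphic functions on a subsurface, has polynomials in $z$ dense — this is the surface case of \cref{lem:density}, available via \cref{thm:calderbank}), apply continuity of $\delta$, and reduce to computing $\delta$ on powers $z^k$. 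Multiplicativity gives $\delta[z^k] = \delta[z]^k \in \R \oplus \Span(B) = \mathbb{A}_B$, so $\delta[f_+] \in \mathbb{A}_B$ for all $f_+ \in \algebra{\bivector}(M)$.

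It remains to identify $\mathbb{A}_B$ with $\planespinors$, i.e. to show $B = \pm\bivector$ pointwise-consistently — equivalently that the plane $\delta$ selects is the plane of $\bivector$, not some other $2$-plane in $\R^n$. Here I would use that $\delta$ is multiplicative over the \emph{whole collection} $\algebra{}(M)$, not just $\algebra{\bivector}(M)$: for a 2-blade $\bivector'$ spanning a plane that shares only a line with $\bivector$, there are relations among $z, z'$ (the corresponding variables from \cref{eq:z}) — concretely, $\bivector$ and $\bivector'$ anticommute or satisfy identities reflecting the Grassmannian incidence — and applying $\delta$ to those relations constrains $B$ to lie in the plane of $\bivector$. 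Alternatively, and more cleanly, one notes that the vector field $\blade{v}$ used to define $z = \projection_{\bivector}(\blade{v}\blade{x})$ lies in the plane of $\bivector$, and the pair $(\blade{v}, \blade{v}\bivector)$ gives an orthonormal frame of that plane; tracking $\delta$ on the linear functions in these two directions and on their product pins the image plane down. Finally, the reverse inclusion $\planespinors \subseteq \delta(\algebra{\bivector}(M))$ is immediate: the constant fields $x + y\bivector$ for $x, y \in \R$ lie in $\algebra{\bivector}(M)$ and $\delta$ sends them to $x\delta[1] + y\delta[\bivector] = x + yB$, and since we have shown $B$ generates the same plane as $\bivector$, these sweep out all of $\planespinors$.

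The main obstacle I anticipate is the identification $B = \pm\bivector$ rather than merely "$B$ is \emph{some} unit 2-blade": multiplicativity on the single algebra $\algebra{\bivector}(M)$ only sees the abstract $\C$-structure and cannot distinguish which plane in $\R^n$ is chosen, so the argument genuinely needs the cross-algebra multiplicativity over all of $\algebra{}(M)$ (or an explicit use of the embedding via the vector $\blade{v}$). Getting the bookkeeping of the Grassmannian incidence relations right — and making sure the relations used are actually satisfied by monogenic fields in $\algebra{}(M)$ so that $\delta$ may be applied to them — is the delicate point; everything else reduces to the density of polynomials in $z$ plus continuity, which are routine given \cref{lem:density} and \cref{thm:calderbank}.
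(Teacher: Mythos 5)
Your outline stalls exactly where the paper's proof is immediate: you treat $B=\delta[\bivector]$ as an unknown unit $2$-blade and propose to pin down its plane via incidence relations among different subsurface algebras, explicitly flagging this as the delicate step you have not carried out. But $\bivector$ is a \emph{constant} field, i.e.\ $\bivector = 1\cdot \bivector$ with $\bivector \in \G^+$, so right $\G^+$-linearity of a spin character gives $\delta[\bivector]=\delta[1]\,\bivector=\bivector$ in one line (and more generally $\delta[\alpha+\beta\bivector]=\alpha+\beta\bivector$ for $\alpha,\beta\in\R$), once one notes $\delta[1]=1$, which both you and the paper use. This is precisely how the paper obtains $\planespinors \subseteq \delta[\algebra{\bivector}(M)]$; your anticipated ``main obstacle'' is not an obstacle, and the Grassmannian/cross-algebra machinery you sketch is never needed. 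You invoked right linearity only for $f_2\bivector$ with non-constant $f_2$, where it indeed does not apply, and so missed its decisive use on constant fields.

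Your forward containment also has a gap: you assert $\delta[z^k]=\delta[z]^k\in\R\oplus\Span(B)$, but grade preservation only gives $\delta[z]\in\G^{0\oplus 2}$; nothing in your argument places the bivector part of $\delta[z]$ in $\Span(B)$, so the conclusion is circular as written (and the density-of-polynomials step it rests on is extra machinery the paper never needs here). The paper argues instead: since $\delta$ is multiplicative on $\algebra{\bivector}(M)$ and grade preserving, the image $\delta[\algebra{\bivector}(M)]$ is a commutative subalgebra of $\G^{0\oplus 2}$ containing $\bivector$; if it contained a bivector $\tilde{\bivector}\notin\Span(\bivector)$, then commuting with $\bivector$ forces the two planes to meet only at $0$, so the product $\bivector\tilde{\bivector}$ --- which again lies in the image --- has a nonzero grade-$4$ part, contradicting grade preservation. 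Hence the image is exactly $\planespinors$, with no appeal to $B^2=-1$, to blade classification, or to other subsurface algebras.
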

\begin{proof}
Since $\delta$ is grade preserving, it must be the case that $\delta[z]\in \G^{0\oplus 2}$. Since $\delta$ is an algebra morphism, $\delta[\algebra{\bivector}(M)]\subset \G^{0\oplus 2}$ is commutative subalgebra. Using linearity as well, $\delta[\alpha + \beta \bivector]=\delta[1](\alpha + \beta \bivector) = \alpha + \beta \bivector$ for $\alpha, \beta \in \R$. Hence $\planespinors \subset \delta[\algebra{\bivector}(M)]$. If $\tilde{\bivector} \in \delta[\algebra{\bivector}(M)]$ commutes with $\bivector$, these bivectors must not intersect as subspaces except at zero which yields the 4-vector $\bivector \tilde{\bivector}\notin \G^{0\oplus 2}$. This contradicts the grade preservation of $\delta$ and thus $\delta[\algebra{\bivector}(M)]=\planespinors$.
\end{proof}

Next, we show that the characters $\characters(M)$ correspond to evaluation at some point in $\R^n$.

\begin{lemma}
\label{lem:correspondence}
Let $M$ be a compact region in $\R^n$ and $\delta \in \characters(M)$. Then $\delta(z)=z(\blade{x}_\delta)$ for some $\blade{x}_\delta \in \R^n$.
\end{lemma}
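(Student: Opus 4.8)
The plan is to run the argument plane by plane. For each coordinate 2-blade $\bivector_{ij}$ we have $z_{ij} \in \algebra{\bivector_{ij}}(M)$, and by \cref{prop:surface_spinors_map_to_plane_spinors} the character $\delta$ sends $\algebra{\bivector_{ij}}(M)$ into the plane spinor algebra $\mathbb{A}_{\bivector_{ij}} \cong \C$. So writing $z = z_{ij}$ and $\bivector = \bivector_{ij}$, we know $\delta[z] = \alpha + \beta \bivector$ for some real $\alpha,\beta$, and we want to exhibit a single point $\blade{x}_\delta \in \R^n$ whose coordinates recover every such $\delta[z_{ij}]$ simultaneously.

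The key observation is that the $z_{ij}$ are not algebraically independent: their real and ``imaginary'' parts are built from the same linear coordinate functions $x^1,\dots,x^n$. For a fixed index $i$, the functions $z_{i j}$ for varying $j$ all share the variable $x^i$ in the bivector slot. More usefully, I would use the identity relating the $z$'s on the \emph{overlap} of two coordinate planes that share an axis, e.g. comparing $z_{1j}$ and $z_{1k}$, both of which involve $x^1$. Concretely, first I would extract candidate coordinates: since $\delta$ is grade preserving and $\R$-linear and fixes constants ($\delta[1]=1$, as in the proof of \cref{prop:surface_spinors_map_to_plane_spinors}), and since $x^j = \proj{}{z_{1j}}$ while $x^1 = -\proj{2}{z_{1j}}\contract \bivector_{1j}^{-1}$ recovers the first coordinate from the grade-2 part, I can \emph{define} $x_\delta^j := \proj{}{\delta[z_{1j}]}$ for $j = 2,\dots,n$ and $x_\delta^1 := -\bigl(\proj{2}{\delta[z_{12}]}\bigr)\contract\bivector_{12}^{-1}$, then set $\blade{x}_\delta = \sum_k x_\delta^k \blade{e}_k$. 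The content of the lemma is that with this choice $\delta[z_{ij}] = z_{ij}(\blade{x}_\delta)$ for \emph{all} $i<j$, not just the ones involving index $1$.

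To see this, I would exploit multiplicativity and grade preservation to pin down the grade-2 parts. The decisive computation: pick $i<j$ and consider the product $z_{1i} z_{1j} \in \algebra{}(M)$ (or rather an appropriate monogenic combination lying in some $\algebra{\bivector}$), whose grade-2 component couples $x^1$, $x^i$, $x^j$; applying $\delta$ and matching grades forces a consistency relation that, combined with the already-determined values $\delta[z_{1i}], \delta[z_{1j}]$, determines $\delta[z_{ij}]$ to be exactly $x_\delta^j - x_\delta^i \bivector_{ij}$. In other words, the real part $x^1$ that $\delta$ assigns via $z_{12}$ must agree with the $x^1$ it assigns via $z_{13}, z_{14}, \dots$, and likewise each $x^i$ appearing in two different $z_{ij}$ must receive a single consistent value — otherwise the products $z_{1i}z_{1j}$ would be sent to a multivector with a nonzero grade-4 part, contradicting grade preservation exactly as in the last step of \cref{prop:surface_spinors_map_to_plane_spinors}. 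Once all coordinates are consistently defined, $\delta[z_{ij}] = z_{ij}(\blade{x}_\delta)$ is immediate.

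\textbf{Main obstacle.} The subtle point is the bookkeeping that turns ``grade preservation kills the grade-4 terms'' into an actual equality of real coordinates, i.e.\ checking that the planes $\bivector_{1i}$ and $\bivector_{1j}$ genuinely share only the line $\R\blade{e}_1$ so that a mismatch in the assigned value of $x^1$ produces a \emph{detectable} 4-vector obstruction under $\delta$. One has to be careful that the product being examined actually lies in one of the commutative subalgebras $\algebra{\bivector}(M)$ over which $\delta$ is assumed multiplicative — this may require first passing to a suitable rotated coordinate plane containing both $z_{1i}$ and $z_{1j}$ directions, or invoking that $\delta$ is multiplicative over the \emph{whole} collection $\algebra{}(M)$ of subsurface algebras, not just the $\bivector_{ij}$-planes. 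I expect this compatibility/rotation argument — ensuring every product used is legitimately in the domain of multiplicativity — to be the part requiring the most care; the rest is linear algebra on grades.
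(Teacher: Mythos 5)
Your setup (reducing to the coordinate functions $z_{ij}$ and using \cref{prop:surface_spinors_map_to_plane_spinors} to write $\delta[z_{ij}]=\alpha_{ij}+\beta_{ij}\bivector_{ij}$) matches the paper, but the decisive consistency step is where your argument has a genuine gap, and the mechanism you propose would not work even if the bookkeeping were done. First, the products $z_{1i}z_{1j}$ are not elements of any single subsurface algebra $\algebra{\bivector}(M)$: the directions $\blade{e}_1,\blade{e}_i,\blade{e}_j$ span a $3$-dimensional space, so there is no rotated coordinate plane containing both variables, and such a product need not even be monogenic; hence the multiplicativity built into the definition of $\characters(M)$ does not license $\delta[z_{1i}z_{1j}]=\delta[z_{1i}]\delta[z_{1j}]$. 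You flagged this obstacle yourself, but the fix you suggest (rotation to a common plane) is impossible. Second, and more fatally, the ``grade-$4$ obstruction'' you want to exploit does not exist for these products: precisely because $\bivector_{1i}$ and $\bivector_{1j}$ share the axis $\blade{e}_1$, one has $\bivector_{1i}\bivector_{1j}=-\bivector_{ij}$, a bivector, so $\delta[z_{1i}]\delta[z_{1j}]$ lies in $\G^{0\oplus 2}$ for \emph{any} choice of the coefficients $\alpha,\beta$; grade preservation therefore imposes no constraint, and no mismatch of the assigned $x^1$ values can be detected this way. Moreover, even granting the multiplicative identity, the value $\delta[z_{1i}z_{1j}]$ gives no handle on $\delta[z_{ij}]$ unless you relate these functions by an explicit identity, which your proposal never supplies.

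The paper closes exactly this gap with purely \emph{linear} identities and right $\G^+$-linearity, with no multiplicativity at all: from $z_{ij}\bivector_{ji}=-z_{ji}$ one gets $\alpha_{ij}=-\beta_{ji}$, and from $z_{ij}=z_{\ell j}+z_{i\ell}\bivector_{\ell j}$ (for $\ell\neq i,j$) one gets $\alpha_{ij}=\alpha_{\ell j}$ and $\beta_{ij}=\beta_{i\ell}$; together these show all the $\alpha_{ij},\beta_{ij}$ are determined by $n$ numbers $x^i_\delta=\alpha_{ji}=-\beta_{ij}$, which is precisely the statement $\delta[z_{ij}]=z_{ij}(\blade{x}_\delta)$. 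In particular the consistency you were trying to force through grade preservation (e.g.\ that the $x^1$ read off from $z_{12}$ agrees with the one read off from $z_{13}$) is exactly the relation $\beta_{1j}=\beta_{1\ell}$ coming from the linear identity, applied with right multiplication by the constant bivector $\bivector_{\ell j}\in\G^+$. If you want to salvage your outline, replace the multiplicative step with these two identities (equivalently, $z_{ij}=z_{1j}-z_{1i}\bivector_{ij}$ for $1\neq i<j$), after which your definition of $\blade{x}_\delta$ from the $\delta[z_{1j}]$ does determine all the remaining values by right linearity alone.
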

\begin{proof}
Take $\delta \in \characters(M)$ and the coordinate planes $\bivector_{ij}$ and the corresponding $z_{ij}$. Applying $\delta$ to $z_{ij}$ yields $\delta[z_{ij}] = \alpha_{ij} + \beta_{ij} \bivector_{ij}$ with $\alpha_{ij}, \beta_{ij} \in \R$ by \cref{prop:surface_spinors_map_to_plane_spinors} and we will collect all $\alpha_{ij}$ and $\beta_{ij}$ into matrices $\alpha$ and $\beta$ respectively. Then, since
\begin{equation}
\label{eq:z_reciprocal_relationship}
z_{ij} \bivector_{ji} = (x^j-x^i \blade{e}_i\blade{e}_j)\blade{e}_j\blade{e}_i = -z_{ji}
\end{equation}
it follows that
\begin{equation}
\delta[z_{ij} \bivector_{ji}] = \delta[z_{ij}] \bivector_{ji} = -\delta[z_{ji}],
\end{equation}
and hence
\begin{equation}
(\alpha_{ij} + \beta_{ij} \bivector_{ij})\bivector_{ji} = \beta_{ij}+\alpha_{ij}\bivector_{ji} = - \alpha_{ji} - \beta_{ji} \bivector_{ji}.
\end{equation}
Therefore, $\alpha_{ij} = -\beta_{ji}$ for all $i \neq j$. Similarly, for arbitrary $\ell \neq i$ and $\ell  \neq j$ we have
\begin{equation}
\label{eq:z_relationship}
z_{ij} = z_{\ell j} + z_{i\ell} \bivector_{\ell j}
\end{equation}
so
\begin{equation}
\delta[z_{ij}] = \delta[z_{\ell j} + z_{i\ell } \bivector_{\ell j}] = \delta[z_{\ell j}]+\delta[z_{i\ell }]\bivector_{\ell j}.
\end{equation}
Expanding this yields the relationships $\alpha_{ij} = \alpha_{\ell j}$ and $\beta_{ij} = \beta_{i\ell }$ for all $i,j,\ell$.

The relationships $\alpha_{ij} = \alpha_{kj}$ and $\beta_{ij} = \beta_{ik}$ show that both sets of constants $\alpha$ and $\beta$ are given by $n$ numbers since they are constant along one index. Taking this with the relationship $\alpha_{ji} = -\beta_{ij}$ shows that both are determined by the same $n$ numbers which we call $x_\delta^i=\alpha_{ji}=-\beta_{ij}$ for $i=1,\dots, n$, just with swapped index and magnitude. Hence there exists some $\blade{x}_\delta = (x^1_\delta,\dots,x^n_\delta) \in \R^n$ so that $\delta[z_{ij}]=z_{ij}(\blade{x}_\delta)$ since
\begin{equation}
    z_{ij}(\blade{x}_\delta) = x^j_\delta - x^i_\delta \bivector_{ij}.
\end{equation}
\end{proof}

To see that the corresponding point $\blade{x}_\delta$ lies in the given region $M$ for any $\delta$, we use continuity and a singular monogenic spinor field.

\begin{lemma}
\label{lem:identification}
Let $M\subset \R^n$ be a compact region and let $f\in \monogenics^+(M)$ and $\delta \in \characters(M)$. Then $\delta(f)=f(\blade{x}_\delta)$ for some $\blade{x}_\delta \in M$.
\end{lemma}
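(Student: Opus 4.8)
The plan is to use the power-series machinery of \cref{subsec:power_series} to reduce the statement to the single claim that the point $\blade{x}_\delta$ produced by \cref{lem:correspondence} actually lies in $M$, and then to prove that claim with a singular monogenic spinor field.

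\emph{Reduction.} By compactness fix a closed ball $\ball\subset\R^n$ with $M\subset\interior\ball$. Given $f_+\in\monogenics^+(M)$, \cref{cor:power_series_for_regions} supplies $f^{(m)}\in\monogenics^+(\ball)$ with $f^{(m)}\to f_+$ uniformly on $M$, and \cref{lem:density} expands $f^{(m)}=\sum_k\sum_{\vec k}p_{\vec k}\,a^{(m)}_{\vec k}$ with uniform convergence on $\ball$, hence on $M$. Since $\delta$ is continuous and right $\G^+$-linear it passes through the series, and since $\delta$ is multiplicative over the subsurface algebras while each $z_{1j}\in\algebra{\bivector_{1j}}(M)$, \cref{eq:homogeneous_monogenic_polynomials} and \cref{lem:correspondence} give $\delta[p_{\vec k}]=\frac{1}{k!}\sum_\sigma\delta[z_{1\sigma(1)}]\cdots\delta[z_{1\sigma(k)}]=p_{\vec k}(\blade{x}_\delta)$. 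Hence $\delta[f^{(m)}]=\sum_k\sum_{\vec k}p_{\vec k}(\blade{x}_\delta)\,a^{(m)}_{\vec k}$, which --- \emph{once we know} $\blade{x}_\delta\in M\subset\interior\ball$ --- is exactly $f^{(m)}(\blade{x}_\delta)$ by the convergence in \cref{lem:density}. Letting $m\to\infty$, continuity of $\delta$ and the (uniform, hence pointwise at $\blade{x}_\delta$) convergence $f^{(m)}\to f_+$ yield $\delta[f_+]=f_+(\blade{x}_\delta)$.

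\emph{Locating $\blade{x}_\delta$.} Suppose $\blade{x}_\delta\notin M$, so $\operatorname{dist}(\blade{x}_\delta,M)>0$, and form $h\coloneqq\blade{G}(\,\cdot-\blade{x}_\delta)\,\blade{v}$ for a fixed unit vector $\blade{v}$, with $\blade{G}$ the Green's function of \cref{eq:greens_function}: then $h$ is monogenic on $M$ by \cref{eq:fundamental_solution}, takes values in $\G^{0\oplus 2}\subset\G^+$, and is bounded on $M$ since $\blade{x}_\delta\notin M$. The reduction shows $\delta$ acts as evaluation at $\blade{x}_\delta$ on every field extending monogenically through $\blade{x}_\delta$; but $h$ cannot be evaluated at $\blade{x}_\delta$, since it is singular there, while $\delta[h]\in\G^+$ must exist --- this tension is the contradiction. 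Concretely, if there is a closed ball $\ball'$ with $M\subset\interior\ball'$ and $\blade{x}_\delta\notin\ball'$, then $h|_{\ball'}\in\monogenics^+(\ball')$ has a \cref{lem:density} expansion $\sum_k\sum_{\vec k}p_{\vec k}\,b_{\vec k}$ whose partial sums converge to $h$ uniformly on $\ball'\supset M$, so by continuity $\delta[h]=\lim_N\sum_{k\le N}\sum_{\vec k}p_{\vec k}(\blade{x}_\delta)\,b_{\vec k}$; but $\blade{x}_\delta$ lies on the sphere bounding the domain of convergence, where $h$ is singular, so this limit diverges --- a contradiction.

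\emph{Main obstacle.} The genuinely hard case is when no separating ball $\ball'$ exists, i.e.\ $\blade{x}_\delta$ lies in a bounded component of $\R^n\setminus M$, or more generally $M$ wraps around $\blade{x}_\delta$. Here I would play the singularity of $h$ against the continuity of $\delta$ using \cref{thm:calderbank} for the embedding $M\subset\R^n\subset S^n$ together with the Cauchy integral formula \cref{eq:cauchy_integral}, supplemented by the invertibility trick in the planar subalgebras: whenever the $(n-2)$-plane $\{x^i=x^i_\delta,\,x^j=x^j_\delta\}$ misses $M$, the field $z_{ij}-z_{ij}(\blade{x}_\delta)\in\algebra{\bivector_{ij}}(M)$ is invertible there while $\delta[z_{ij}-z_{ij}(\blade{x}_\delta)]=0$, contradicting $\delta[1]=1$. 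Packaging this into a clean statement --- that $\overline{\monogenics^+(M)}$ separates $M$ from $\R^n\setminus M$ --- is the technical heart; given it, the identification $\delta=\operatorname{ev}_{\blade{x}_\delta}$ on $\monogenics^+(M)$ is complete, and the homeomorphism and isometry in \cref{thm:gelfand} follow formally.
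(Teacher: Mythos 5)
Your reduction step and your choice of probe field are exactly the paper's: the power-series argument pins $\delta$ down to the data $\delta[z_{ij}]=z_{ij}(\blade{x}_\delta)$ from \cref{lem:correspondence}, and the Green's kernel $\blade{G}(\cdot-\blade{x}_\delta)$ is the right object for ruling out $\blade{x}_\delta\notin M$. But your location argument is genuinely incomplete, and you say so yourself: the case in which no ball $\ball'$ separates $\blade{x}_\delta$ from $M$ (e.g.\ $M$ a spherical shell with $\blade{x}_\delta$ in the enclosed cavity, where also every coordinate $(n-2)$-plane through $\blade{x}_\delta$ meets $M$, so the invertibility trick with $z_{ij}-z_{ij}(\blade{x}_\delta)$ is unavailable) is left as an unproved ``technical heart''; note this case already occurs whenever $\blade{x}_\delta$ lies in the convex hull of $M$ but not in $M$, since any ball containing $M$ contains its convex hull. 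Moreover, even in the separated case your concrete contradiction leans on the claim that the polynomial series must diverge at $\blade{x}_\delta$ because $\blade{x}_\delta$ sits on the sphere bounding the domain of convergence; boundary behaviour does not work that way in general (a series can converge at a boundary point where the function is singular, as the dilogarithm does at $1$), so that step would need a separate justification.

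The paper closes the location step without any case split by a small but decisive change of probe: instead of expanding the field with pole at $\blade{x}_\delta$, it takes points $\blade{x}_n\notin M$ with $\blade{x}_n\to\blade{x}_\delta$ and the fields $f_n=\blade{G}(\cdot-\blade{x}_n)\blade{e}_1\vert_M\in\monogenics^+(M)$. Each $f_n$ is monogenic near $M$ and at $\blade{x}_\delta$, so the evaluation formula from your reduction gives $\delta[f_n]=f_n(\blade{x}_\delta)=\blade{G}(\blade{x}_\delta-\blade{x}_n)\blade{e}_1$, which blows up in norm as $n\to\infty$; on the other hand, if $\blade{x}_\delta\notin M$ then $\operatorname{dist}(\blade{x}_\delta,M)>0$, so $f_n\to\blade{G}(\cdot-\blade{x}_\delta)\blade{e}_1\vert_M$ uniformly on $M$, and continuity of $\delta$ forces $\delta[f_n]$ to converge to a finite value of $\G^+$. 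This contradiction needs no separating ball and is insensitive to which component of $\R^n\setminus M$ contains $\blade{x}_\delta$, which is precisely the configuration your proposal leaves open; replacing your second and third steps by this sequence argument brings your write-up in line with the paper's proof.
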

\begin{proof}
To see that $\blade{x}_\delta \in M$, take $f_0(x)\coloneqq \blade{G}(\blade{x}-\blade{x}_0)\blade{e}_1$ with $\blade{x}_0\not\in M$. Again, $\blade{G}$ is the Green's function for the Hodge--Dirac operator. Then $f_0\vert_M \in \monogenics^+(M)$. By \cref{lem:correspondence} we have some $\blade{x}_\delta\in \R^n$ such that
\begin{align}
\delta(f_0\vert_M)=f_0\vert_M(\blade{x}_\delta).
\end{align}
Take a sequence $\blade{x}_n \to \blade{x}_\delta$ and suppose for a contradiction that $\blade{x}_\delta \notin M$ and each $\blade{x}_n \notin M$. Then this defines a sequence of functions $f_n(\blade{x}) \coloneqq E(\blade{x} - \blade{x}_n)\blade{e}_1 \vert_M \in \monogenics^+(M)$ and the sequence converges uniformly to a monogenic function $\lim_{n\to \infty} f_{n}(\blade{x}) = \blade{G}(\blade{x}-\blade{x}_\delta)\blade{e}_1$. By continuity of $\delta$,
\begin{align}
\lim_{n\to \infty} \delta(f_n) = \lim_{n\to \infty} f_n(\blade{x}_\delta),
\end{align}
which does not converge due to the singularity at $\blade{x}_\delta$ which contradicts the fact that the limit converges to a monogenic function. Hence, it must be that $\blade{x}_\delta \in M$.
\end{proof}

One practical reason behind working with regions of $\R^n$ is that there are clear choices of functions to use to probe whether a point is in the region or not. For an arbitrary $n$-dimensional manifold we cannot guarantee an embedding into $\R^n$ and the technique using the Cauchy kernel $\blade{G}$ fails and a version of \cref{lem:identification} for arbitrary compact manifolds is not obvious. Likewise, \cref{lem:correspondence} could be viewed as a local result for characters on a coordinate patch, but it is not clear how the restriction of a character to local coordinate patches behaves. Nonetheless, we arrive at the proof for the main theorem.

\begin{proof}[Proof of \cref{thm:gelfand}]
Fix $M$ a compact region of $\R^n$. It is clear that the map $M \to \characters (M)$ is an embedding by mapping a point $\blade{x}\in M$ to $\delta_{\blade{x}} \in \characters(M)$ (inverse of $\Gamma$). Then, by \cref{lem:identification}, any $\delta \in \characters(M)$ corresponds to $\blade{x}_\delta \in M$ showing the reverse inclusion. Hence the sets are in bijection via $\Gamma$ and under the weak-$\ast$ topology, $\Gamma$ is also continuous and hence we have the homeomorphism $M\cong \characters(M)$.

To see that the Gelfand transform $\monogenics^+(M) \to \contfields{\characters(M)}{\G^+}$ is an isometry, we note that
\begin{equation}
\|\hat{f}\| = \sup_{\delta \in \characters(M)} |\hat{f}(\delta)| = \sup_{\delta \in \characters(M)} |\delta[f]| = \sup_{\blade{x}_\delta \in M} |f(\blade{x}_\delta)| =\|f\|.
\end{equation}
Hence, we have our theorem.
\end{proof}

\subsection{Further results and discussion}

Though the behavior of characters on regions has been determined, it is still an open question whether \cref{thm:gelfand} can be extended to arbitrary $n$-dimensional compact Riemannian manifolds with boundary. This extension is not immediately obvious, but there is more to be said that may assist the general case in the future. Again using motivation from complex analysis, $\monogenics^+(M)$ retains some necessary features but others are missing.

\subsubsection{Stone--Weierstrass}
Firstly, let us prove a Stone--Weierstrass result showing the density of closure of the monogenic spinor fields in the space of continuous spinor fields. The proof of the theorem will require the following lemma.

\begin{lemma}
If $M$ be a compact Riemannian manifold with boundary, then the space $\overline{\monogenics^+(M)}$ separates points.
\end{lemma}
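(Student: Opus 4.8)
The plan is to prove the slightly stronger statement that $\monogenics^+(M)$ itself separates the points of $M$; since $\monogenics^+(M)\subseteq\overline{\monogenics^+(M)}$, this suffices. Fix distinct points $x,y\in M$. As $M$ is Hausdorff, choose disjoint open neighborhoods $U_x\ni x$ and $U_y\ni y$ (for concreteness, small geodesically convex coordinate balls). The essential observation is that a monogenic spinor field only needs to be prescribed \emph{locally} before being globalized through Calderbank's approximation theorem \cref{thm:calderbank}, and locally there is no obstruction whatsoever: on the open set $U\coloneqq U_x\sqcup U_y$ let $h$ be the constant spinor $1\in\G^+$ on $U_x$ and the constant spinor $0$ on $U_y$. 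Monogenicity is a local condition and constant fields lie in the kernel of $\grad$, so $h\in\monogenics(U)$, and $h$ is even-graded by construction.

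Next I would apply \cref{thm:calderbank} to $U$: there is a sequence $g_n\in\monogenics(M)$ whose restrictions to $U$ converge to $h$ locally uniformly, hence in particular $g_n(x)\to h(x)=1$ and $g_n(y)\to h(y)=0$. To land inside the spinor subalgebra, replace $g_n$ by its even part $(g_n)_+$. Because $\grad$ is a grade-$1$ operator it carries $\smoothfields{M}{\G^{\pm}}$ into $\smoothfields{M}{\G^{\mp}}$, so $\grad g_n=0$ forces $\grad(g_n)_+=0$ and $(g_n)_+\in\monogenics^+(M)$; and since the projection onto $\G^+$ is orthogonal for the multivector inner product (even and odd grades being mutually orthogonal) while $h$ is already even, we still have $(g_n)_+\to h$ pointwise on $U$. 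Consequently $(g_n)_+(x)\to 1$ and $(g_n)_+(y)\to 0$, so $(g_n)_+(x)\neq(g_n)_+(y)$ for all sufficiently large $n$. This exhibits an element of $\monogenics^+(M)\subseteq\overline{\monogenics^+(M)}$ separating $x$ and $y$, and since $x\neq y$ were arbitrary the lemma follows.

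The step that deserves the most care — and the only place the argument is more than bookkeeping — is the invocation of Calderbank's theorem for the \emph{disconnected} set $U=U_x\sqcup U_y$ with $h$ taking different values on its two components; this is precisely the Runge-type phenomenon, and it is what lets us ignore the global geometry of $M$ entirely (no convexity, connectedness, or topological hypothesis beyond compactness is used). One should also confirm that the fields $g_n$ produced by \cref{thm:calderbank} are continuous on all of $M$, including $\partial M$, so that $(g_n)_+\in\contfields{M}{\G^+}$; this is built into the definition of $\monogenics(M)$ as $C^\infty$ sections of $\G M$. For $M$ a compact region in $\R^n$ one could instead separate $x$ and $y$ directly with a translated Cauchy kernel $\blade{G}(\blade{x}-\blade{x}_0)\blade{e}_1$ as in \cref{lem:identification}, choosing the pole $\blade{x}_0\notin M$ generically so that this monogenic spinor field distinguishes the two points, but the local-to-global argument above has the advantage of covering arbitrary compact Riemannian $M$.
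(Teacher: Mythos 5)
Your argument is correct, but it takes a genuinely different route from the paper. The paper connects $x,y\in \interior M$ by a shortest geodesic (invoking $C^1$ regularity of geodesics in manifolds with boundary), thickens it to a tube $\mathbb{T}_\gamma$ using a lower bound on the injectivity radius, parallel-transports a unit $2$-blade along the tube to build $\bivector$, and separates $x$ from $y$ with the subsurface spinor $z\in\algebra{\bivector}(\mathbb{T}_\gamma)$ of \cref{eq:z} before applying \cref{thm:calderbank} to the \emph{connected} open set $\mathbb{T}_\gamma$. You instead apply \cref{thm:calderbank} to the disconnected set $U_x\sqcup U_y$ with a locally constant even field, then pass to even parts; the parity bookkeeping is sound, since $\grad$ exchanges $\smoothfields{M}{\G^{\pm}}$, so the even part of a monogenic field is monogenic, and the pointwise projection onto $\G^+$ is norm-nonexpanding. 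What your route buys: no geodesics, tubes, or parallel transport; it handles boundary points and even disconnected $M$ without change; and it yields separation by an honest element of $\monogenics^+(M)$ for large $n$, which is cleaner than the paper's final step (the paper speaks of "the uniform limit" of the approximants, though they are only guaranteed to converge locally uniformly on the tube, so the sharper conclusion is exactly the one you draw: some approximant already separates). The one point to police is the same one you flag: everything rests on Calderbank's theorem applying to a disconnected open subset. As quoted in the paper the theorem imposes no hypothesis on $U$, and in Runge-type approximation the relevant condition is on the complement rather than on connectedness of $U$ — here $M\setminus(U_x\sqcup U_y)$ has no compact components disjoint from $\partial M$, so the standard Lax--Malgrange/Runge phenomenon is in your favor — but if Calderbank's Theorem~11.7 were stated only for connected $U$ (or with a complement condition your $U$ failed), your shortcut would need the paper's tube construction as a fallback. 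The paper's connected tube sidesteps that worry entirely, at the cost of a more elaborate geometric construction.
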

\begin{proof}
Let $x,y\in \interior M$ be distinct points. We want to construct some field $f \in \monogenics^+(M)$ such that $f(x)\neq f(y)$. Since $M$ is compact, there exists a shortest path $\gamma \colon [0,1] \to M$ between $x$ and $y$ and moreover by \cite{alexander_geodesics_1981} this path is $C^1$ since both $M$ and $\partial M$ are $C^\infty$. Since $\gamma$ must always be $C^1$, $\gamma$ has a well-defined tangent vector at each $t$ and a well-defined normal space $N_{\gamma(t)}\gamma$ which is orthogonal to the tangent vector $\blade{\dot{\gamma}}(t)$.

Since $M$ is compact, for all $t$ the injectivity radius of the exponential map at $\gamma(t)$ is bounded from below by some $\epsilon > 0$. Hence, we can construct a tube $\mathbb{T}_\gamma$ about $\gamma$ by taking $\mathbb{T}_\gamma \coloneqq \gamma \times \mathbb{D}_\epsilon$ where $\mathbb{D}_\epsilon(t)$ is the image under the exponential map of the disk of radius $\epsilon$ in the normal space at $N_{\gamma(t)}\gamma$. Any point $\tilde{x} \in \mathbb{T}_\gamma$ is given uniquely by coordinates $(t,\blade{v})$ where $\blade{v}\in N_{\gamma(t)} \gamma$. Given a unit 2-blade $\bivector(x) \in \G_x M$, we can parallel translate $\bivector(x)$ to a unit 2-blade $\bivector(\tilde{x})$ any point $\tilde{x} \in \mathbb{T}_\gamma$ by parallel translation along $\gamma$ and then parallel translation in the normal direction. This builds unit 2-blade field $\bivector$ on $\mathbb{T}_\gamma$.

Then, on $\mathbb{T}_\gamma$, define the field $z\in \algebra{\bivector}(\mathbb{T}_\gamma)$ using the unit 2-blade field $\bivector$ following \cref{eq:z}. Then $z(x)\neq z(y)$ and $z\in \monogenics^+(\mathbb{T}_\gamma)$. Since $\mathbb{T}_\gamma$ is a union of open sets, $\mathbb{T}_\gamma$ is open in $M$, and we can uniformly approximate $z$ by elements of $\monogenics^+(M)$. Taking the uniform limit of these approximations yields a function $f \in \overline{\monogenics^+(M)}$ satisfying $f(x)\neq f(y)$.
\end{proof}

The space $\overline{\monogenics^+(M)}$ is not an algebra, but we can consider the minimal algebra that the space generates. Let $\vee \overline{\monogenics^+(M)}$ represent the minimal algebra generated by $\overline{\monogenics^+(M)}$, then using the previous lemma and a result from Laville and Ramadanoff in their paper on the Stone--Weierstrass theorem for Clifford-valued functions \cite{laville_stone-weierstrass_1996}, we will get the following theorem.

\begin{theorem}
$\vee \overline{\monogenics^+(M)}$ is dense in $\contfields{M}{\G^+}$.
\end{theorem}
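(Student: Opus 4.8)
The plan is to realize $\vee \overline{\monogenics^+(M)}$ as a subalgebra of the real $C^\ast$-algebra $\contfields{M}{\G^+}$ satisfying the hypotheses of the Stone--Weierstrass theorem for Clifford-algebra--valued functions of Laville and Ramadanoff \cite{laville_stone-weierstrass_1996}, with the lemma just proved supplying point separation. Three of the ingredients are immediate. First, $\contfields{M}{\G^+}$ is a real $C^\ast$-algebra with involution $\dagger$ --- this is the even-grade version of the $C^\ast$-algebra proposition proved above --- and $\G^+$ is itself a geometric algebra, so the cited theorem is applicable. Second, $\vee \overline{\monogenics^+(M)}$ is a subalgebra by construction and contains the constants: any constant spinor $\alpha \in \G^+$ has $\grad \alpha = 0$, so $\G^+ \subseteq \monogenics^+(M) \subseteq \overline{\monogenics^+(M)} \subseteq \vee \overline{\monogenics^+(M)}$, the identity included. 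Third, since $\overline{\monogenics^+(M)} \subseteq \vee \overline{\monogenics^+(M)}$, the preceding lemma gives at once that $\vee \overline{\monogenics^+(M)}$ separates the points of $M$.

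The delicate point --- and what I expect to be the main obstacle --- is verifying the remaining hypothesis of the cited theorem and confirming that it has genuine content. Point separation together with the constants is \emph{not} enough for density on its own: inside the complex $C^\ast$-algebra $C(\overline{\mathbb{D}};\C)$ the disk algebra contains $\C$ and separates points yet is a proper closed subalgebra, so a further condition --- a self-adjointness-type requirement, or equivalently a ``local generation'' condition --- is unavoidable. I would first pin down precisely which such condition Laville and Ramadanoff impose, then discharge it. For $M$ embedded in $\R^n$ with $n \geq 3$ this can be done explicitly: although each $z_{ij} = x^j - x^i \bivector_{ij}$ stays inside the single commutative algebra $\algebra{\bivector_{ij}}(M)$, multiplying these fields by the constant bivectors $\bivector_{ij} \in \G^+$ and combining the results across distinct coordinate $2$-planes lets one solve a linear system whose solution is the pure scalar coordinate field $x^k$; hence each $x^k$, and with it every polynomial spinor field, lies in $\vee \overline{\monogenics^+(M)}$. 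This step genuinely requires at least three coordinate planes, i.e.\ $n \geq 3$, and it is exactly where the argument degenerates in dimension two: there the only such field is $z_{12}$, $\monogenics^+(M)$ is already the closed commutative algebra of holomorphic functions, and $x^1 \notin \vee \overline{\monogenics^+(M)}$, mirroring the disk-algebra obstruction.

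With the extra hypothesis in hand the proof assembles quickly: either one invokes the Laville--Ramadanoff theorem directly to conclude that the uniform closure of $\vee \overline{\monogenics^+(M)}$ is all of $\contfields{M}{\G^+}$, or --- equivalently, for embedded $M$ --- one notes that $\vee \overline{\monogenics^+(M)}$ then contains the polynomial spinor fields, which are dense in $\contfields{M}{\G^+}$ by the classical Stone--Weierstrass theorem. Passing to the minimal generated algebra costs nothing, since it is a subalgebra by definition and inherits the constants and point separation from $\overline{\monogenics^+(M)}$; the only part requiring real work is the structural step of the previous paragraph, and pushing that step (or the verification of the Laville--Ramadanoff hypothesis) through for a genuinely intrinsic compact Riemannian $M$, where no global coordinate fields are available and one must work chart-by-chart, is where I would expect the remaining technical difficulty to concentrate.
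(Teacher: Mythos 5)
As a proof of the theorem actually stated --- which concerns an arbitrary compact Riemannian $M$ with boundary --- your argument has a genuine gap: the only route you carry to completion is for compact regions of $\R^n$ with $n\geq 3$, where the global fields $z_{ij}$ and conjugation by constant bivectors (e.g.\ $\frac{1}{2}\left(z_{12}+\bivector_{13}z_{12}\bivector_{13}^{-1}\right)=x^2$, which is indeed correct) recover the coordinate functions; both the identification of the missing Laville--Ramadanoff hypothesis and the intrinsic, chart-by-chart case are explicitly left open. The paper's proof uses no embedding and no coordinate extraction at all: it checks that $\overline{\monogenics^+(M)}$ contains $1$, separates points (the preceding lemma, proved intrinsically via tubes about geodesics), and consists of fields invariant under the principal (grade) involution --- automatic for spinor fields since $(-1)^{2k}=1$ --- then invokes \cite[theorem 3]{laville_stone-weierstrass_1996} and finishes by passing to scalar components $f\mapsto f_{\mathcal{I}}=(f,\blade{E}^{\mathcal{I}})$ and applying the classical Stone--Weierstrass theorem on each summand of $\contfields{M}{\G^+}=\bigoplus_{2k}\contfields{M}{\R}\blade{E}_{\mathcal{I}}$. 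So the ``further condition'' you suspected but could not name is principal-involution invariance, and the paper discharges it in one line, valid for any compact $M$, not just embedded regions.

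That said, your skepticism is the most valuable part of the proposal, and your dimension-two observation is correct: for $M\subset\R^2$ the space $\monogenics^+(M)$ is already a commutative algebra, so $\vee\overline{\monogenics^+(M)}=\overline{\monogenics^+(M)}$ is a proper closed subalgebra of $\contfields{M}{\G^+}\cong\contfields{M}{\C}$ (it cannot approximate $z^\dagger$), and density fails. Since that algebra contains the constants, separates points, and is even-graded (hence principal-involution invariant), your example shows these conditions alone cannot force density; whatever makes the cited theorem work must exploit the noncommutativity of the constant algebra $\G^+$ --- exactly the conjugation mechanism you describe, which needs $n\geq 3$, as in the quaternionic Stone--Weierstrass theorem. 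In other words, you have exposed a low-dimensional caveat that the paper's one-line verification does not confront, but you have not proved the theorem in its stated generality: for that you would need either to verify the actual hypotheses of Laville--Ramadanoff for arbitrary compact $M$ (as the paper does) or to make your component-extraction argument work intrinsically, chart by chart, which you acknowledge remains undone.
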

\begin{proof}
Since $\overline{\monogenics^+(M)}$ contains 1 and separates points, it is a candidate for the use of Laville and Ramadanoff \cite[theorem 3]{laville_stone-weierstrass_1996}. In order to use their result in all dimensions, we must have that $f\in \overline{\monogenics^+(M)}$ is invariant with respect to the principal involution $f_*$. Since $f$ is a spinor field, $f=\sum_{2k=0}^n f_{2k}$ and 
\begin{equation}
f_*=\sum_{2k=0}^n (-1)^{2k}f_{2k}= \sum_{2k=0}^n f_{2k} = f_*,
\end{equation} 
so $f$ is invariant under the principal involution. 

Matching our notation to Laville's, take a basis $2k$-blade $\blade{E}_\mathcal{I}$ (i.e., $|\mathcal{I}|=2k$ is an ordered list of indices and $\blade{E}_\mathcal{I}$ is given by \cref{eq:basis_blades}), then given $f\in\vee \overline{\monogenics^+(M)}$ we can take $f\mapsto f_\mathcal{I} = (f, \blade{E}^\mathcal{I})$ (see \cref{eq:inner_product_with_basis}) which produces a dense subset $\vee \contfields{M}{\R}_\mathcal{I} \subset \contfields{M}{\R}$ by the classical Stone--Weierstrass theorem. Hence, since
\[
\contfields{M}{\G^+} = \bigoplus_{2k} \contfields{M}{\R}\blade{E}_\mathcal{I}
\]
we conclude that $\vee \overline{\monogenics^+(M)}$ is dense in $\contfields{M}{\G^+}$.
\end{proof}

\subsubsection{Tomography}

As discussed earlier, the work in this paper is heavily motivated by the Boundary Control (BC) method for the inverse tomography problem. The BC method was used in Belishev's proof for the 2-dimensional Calder\'on problem in \cite{belishev_calderon_2003}. Our paper manages to show that we can determine the homeomorphism type of an embedded manifold from the spinor spectrum, but we are missing other key facts that would lead to a solution for the tomography problem. Essentially, we need the following additional facts to use the BC method:
\begin{enumerate}[i.]
\item The Dirichlet-to-Neumann (DN) map determines the space $\trace \monogenics^+(M)$.
\item The boundary trace map $\trace \colon \vee \monogenics^+(M) \to \trace \vee \monogenics^+(M)$ where $f_+ \mapsto f_+ \vert_\boundary$ is an isometric isomorphism of algebras. 
\item The space $\monogenics^+(M)$ determines the metric structure of $M$ up to isometry.
\end{enumerate}
Given the results of this paper alongside items (i) and (ii), we would be able to determine a compact embedded $M$ up to homeomorphism. We can view (iii) as an extension of our result here. Namely, we have determined the homeomorphism type of $M$ from the space $\monogenics^+(M)$, but have yet to gain any metric data.

Let us discuss each of the points above.
\begin{enumerate}[i.]
\item Using the Dirichlet-to-Neumann operator $\Lambda$ on differential forms, Belishev and Sharafutdinov in \cite{belishev_dirichlet_2008} describe an ancillary boundary operator called the \emph{Hilbert transform} $T$. The Hilbert transform is a classical operator in complex analysis and it also appears in Clifford analysis as an operator on the $L^2$-completion of the space $\trace \contfields{M}{\G}$. Two good sources include Brackx and De Schepper's paper \cite{brackx_hilbert_2008} (which specifically concentrates on compact regions of $\R^n$) and Calderbank's thesis \cite{calderbank_geometrical_1995}. 

Given a function on the boundary $\phi\in \trace \contfields{M}{\G}$, the Hilbert transform in Clifford analysis allows one to uniquely recover the complete boundary data of a monogenic field $f$ with $\phi$ as a component of $f\vert_{\partial M}$. In essence, the Hilbert transform yields boundary values of functions conjugate in the generalized Cauchy--Riemann equations given by $\grad f=0$. In fact, if $\phi_k$ is a $k$-vector, the Hilbert transform of $\phi_k$ contains a $k-2$-, a $k$-, and $k+2$-vector component. This is part of why we suggest to look beyond pairwise conjugate forms.

A reasonable question to ask is: are these two Hilbert transform operators related in some way? Moreover, Sharafutdinov and Shonkwiler extend Belishev and Sharafutdinov's Dirichlet-to-Neumann operator to the complete Dirichlet-to-Neumann operator \cite{sharafutdinov_complete_2013}. If it is not possible to relate Belishev and Sharafutdinov's Hilbert transform using the DN operator on forms to the Hilbert transform in Clifford analysis, is there a related operator defined in terms of the complete DN operator that relates to Clifford analysis?

\item This point is essentially given as an open question by Belishev and Vakulenko \cite{belishev_algebras_2017}. Specifically, those two ask whether it is true that the algebras $\vee \monogenics^+(M)$ and $\vee \mathrm{tr}\monogenics^+(M)$ are isometrically isomorphic. At the moment, we have a partial answer: by the Cauchy integral formula, a monogenic field $f\in \monogenics(M)$ is determined by its boundary values therefore the map $\trace \colon \monogenics(M) \to \trace \monogenics(M)$ is an isomorphism of vector spaces and by the weak maximum principle for elliptic operators, it is also an isometry. This of course applies to the spinor subspace $\monogenics^+(M)$. However, it is not clear that the algebras $\vee \monogenics^+(M)$ and $\vee \mathrm{tr} \monogenics^+(M)$ are isomorphic. In Clifford analysis, we have the \emph{Hardy space} $\hardy(M)$ as the $L^2$-completion of $\trace \monogenics(M)$ which is studied in both the previously referenced papers \cite{brackx_hilbert_2008,calderbank_geometrical_1995}. Perhaps there is more knowledge about $\hardy(M)$ that could assist in finding a proof of fact (ii).

\item There is likely geometric content inside the spinor spectrum and this could lead to determining, at the very least, the conformal class of the metric. First, it is widely known that the Hodge--Dirac operator is conformally invariant \cite{calderbank_dirac_1997}. Hence, it may be possible to construct a metric $g$ up to conformal equivalence from the spinor spectrum given that the spinor spectrum is already homeomorphic to $M$. This should not be shocking; Belishev in \cite{belishev_calderon_2003} was able to do this for surfaces $S$ with single boundary component, as he proved that the topologized spectrum is conformally equivalent to $S$ (and in fact was determined by the classical DN operator). It could be that an procedure analogous to Belishev's technique for finding a conformal metric in \cite{belishev_calderon_2003} can be performed with the spinor spectrum.

It could be that we can do better than extracting just the conformal class for dimension $n\geq 3$. As a reminder, the 2-dimensional problem cannot determine more than the conformal class of $g$ since $\Delta$ is conformally invariant in dimension 2. But, if we consider subsurfaces $S$ inside of $M$, we can collect conformal copies of the metric restricted to the surface, vary the over a collection of surfaces passing through a point, and perhaps the combined data from all surfaces passing through each point could produce a metric in the isometry class of $M$. 
\end{enumerate}

\subsubsection{Characters on arbitrary compact $M$}
Aside from the above points, we want the results of this paper to hold true for arbitrary compact $M$, not just compact regions of $\R^n$. We briefly discussed the issue with our proof technique just before the proof of \cref{thm:gelfand}, but the core issue is that our proof hinged on a global power series representation which was valid since $M$ was embedded in $\R^n$. If the power series is only local, then we must, in some sense, understand the restriction of spin characters to local coordinate patches, but this is not understood. 

To view the spin characters from a different perspective, it could be interesting to take $\delta \in \characters(M)$ and consider $\ker \delta$. In the case for a surface $S$, the kernel of a character is in one-to-one correspondence with the set of maximal ideals of the algebra of holomorphic functions. Succinctly, we can match a character $\delta_x$ with the class of holomorphic functions $[f]$ who vanish at the point $x$. The maximal ideals of the space of holomorphic functions are exactly the functions that vanish at just a single point. It is not clear that elements in $\ker \delta$ have this property when the dimension of $M$ exceeds 2. Part of the proof for the 2-dimensional result used by Belishev in \cite{belishev_calderon_2003} follows from \cite[Exercise 26.4, pg. 205]{forster_lectures_1981} which can be proven using sheaves. To that end, it may be useful to think of the space $\monogenics^+(M)$ in the context of sheaves.

On a different note, it could also be useful to identify the $\G$-currents $\contcurrents{\G}{\G}$ with $\G$-valued Radon measures. Additivity of measures over subsets and the regularity of Radon measures may allow for characters to be applied to local power series representations of the monogenic spinor fields. If this is the case, compactness of $M$ would mean that a spin character corresponds to evaluation at finitely many points. As a final step, we could possibly use the fact that $\overline{\monogenics^+(M)}$ separates points to conclude that a character $\delta \in \characters(M)$ is evaluation at only a single $x_\delta \in M$.

\section{Conclusion}

The heart of this paper is to extend the theory Gelfand on commutative Banach algebras of $\C$-valued functions to noncommutative Banach algebras of $\G$-valued functions. In essence, we can find copies of $\C$ as subalgebras of plane spinors $\planespinors \subset \G^+$ and copies of complex holomorphic functions as the monogenic surface spinors $\algebra{\bivector}$. Using the fact we can also locally construct a power series for monogenic spinor fields in terms of monogenic variables $z$ with coefficients in $\G^+$, we define the characters accordingly. Hence, we have a meaningful notion of a spectrum and achieve our main theorem. 

This theory, when restricted to a 2-dimensional surface, yields the Gelfand representation, but allows us to achieve new results in higher dimensions with the same process. More or less, this hinges on the Cauchy integral formula for multivector fields in the same way the Cauchy integral acts as a linchpin in many of the classical complex analysis theorems. Hopefully this entices others to consider the role of the Banach algebra of $\G$-valued functions and the dual space of $\G$-currents.

We expect that more results will follow in the future. For example, we suspect \cref{thm:gelfand} can be generalized to arbitrary compact Riemannian manifolds. If this is to help towards a solution of the Calder\'on problem, this information would all need to be extracted from the boundary and, in particular, from the Dirichlet-to-Neumann operator. Experts in Clifford analysis and elliptic theory may have useful insights into this problem.

\bibliographystyle{unsrt}
\bibliography{clifford_gelfand}

\end{document}